\newcommand{\ka}{\mathbf{k}}
\newcommand{\ot}{\otimes}
\newcommand{\ra}{\longrightarrow}
\newcommand{\la}{\lambda}
\newtheorem{theorem}{Theorem}[section]
\newtheorem{lem}[theorem]{Lemma}
\newtheorem{prop}[theorem]{Proposition}
\newtheorem{cor}[theorem]{Corollary}
\theoremstyle{definition}
\date{}
\author{Marcin Chałupnik and Patryk Jaśniewski}
\title{On strict polynomial functors with bounded domain}
\begin{document}
\maketitle

\begin{abstract}
We introduce a new functor category: the category $\mathcal{P}_{d,n}$ of strict polynomial functors with bounded by $n$ domain of degree $d$ over a field of characteristic $p>0$. It is equivalent to the category of finite dimensional modules over the Schur algebra $S(n,d)$, hence it allows one to apply the tools available in functor categories to representations of the algebraic group $\operatorname{GL}_n$.  We investigate in detail the homological algebra in ${\cal P}_{d,n}$ for $d=p$ and establish equivalences between certain subcategories of ${\cal P}_{d,n}$'s which resemble the Spanier-Whitehead duality in stable homotopy theory.
\newline

\noindent {\it 2010 Mathematics Subject Classification:} 
16E30, 16E35,
18A25,  20G15.\\
 {\it Keywords}: block, duality, Ext-group, polynomial representation, Schur algebra, Schur functor, strict polynomial functor, Spanier-Whitehead duality
\end{abstract}
\section*{Introduction}
It was demonstrated in a breakthrough work \cite{FLS} that the category
${\cal F}$ of functors from the category of finite dimensional vector spaces over a a finite field {\bf k} to the category of all vector spaces over {\bf k} is a valuable tool for investigating representations of $\operatorname{GL}_n({\bf k})$ for $n>>0$. This approach has turned out to be particularly effective in a study of homological problems for $\operatorname{GL}_n({\bf k})$-modules, like computing Ext-groups between important objects. Even more impressive results were obtained by using the category ${\cal P}_d$ of strict polynomial functors of degree $d$ introduced by Friedlander and Suslin in \cite{fs}, which may be thought of as an algebro-geometric version of ${\cal F}$. This variant allows one to enjoy both flexibility offered by a functor category and various rich algebraic structures like the structure of a highest weight category, block theory etc.  What is interesting here is that  it can be shown that  ${\cal P}_d$ is equivalent to the category of finite dimensional modules over the Schur algebra 
$S(n,d)$ for $n\geq d$ \cite[Thm. 3.2]{fs}, which is a classically studied object. However, some crucial technical tools available in ${\cal P}_d$, like the sum-diagonal adjunction, are completely invisible from the level of modules over the Schur algebra.

In the present article we introduce the functor category ${\cal P}_{d,n}$, which is a modification of ${\cal P}_{d}$ dedicated to the study of representations of $\operatorname{GL}_n({\bf k})$  also for small values of $n$. Technically, our definition is quite straightforward: our definition differs from that of ${\cal P}_d$ by the fact we  restrict the domain of functors to the spaces of dimension at most $n$. Thus our category is closely related to ${\cal P}_d$ and shares its basic properties, however, one quickly realizes that it is much more complicated than ${\cal P}_d$. In particular, we do not have the sum-diagonal adjunction anymore, which has some interesting consequences (see Section 2). Nevertheless, the functorial approach is still beneficial in our situation as we demonstrate in Section 2 by providing some non-trivial Ext-computations. We also introduce in Section 3 a certain  duality, which is closely related to the monoidal duality for $\operatorname{GL}_n$-modules, though it seems not to have any counterpart for ${\cal P}_d$. We think of our work as the starting point for a systematic study of functor categories in ``the unstable case'' and we hope to develop our investigations in future works.

Now we briefly describe the contents of the article.
In Section 1 we introduce the category ${\cal P}_{d,n}$ and establish its basic properties. We show  that it is equivalent to the category of finite dimensional 
modules over $S(n,d)$ (Thm. 1.2) and we recall the known relationship of the latter with the category of representations of the algebraic group $\operatorname{GL}_n$. We also study the relationship between ${\cal P}_{d,n}$ and ${\cal P}_d$, which is best understood in terms of recollement diagrams (Thm. 1.4). It will be an important tool in the computations performed in Section 2.

In Section 2 we study in detail the homological algebra in ${\cal P}_{p,n}$,
where $p$ is the characteristic of ground field. We observe many similarities 
to the case of ${\cal P}_p$, which was studied in \cite{pj} (e.g. the formality of the Yoneda algebra of simples, see Thm. 2.2), but we also notice important differences. In particular, we compute
the groups ${\rm Ext}_{{\cal P}_{p,n}}^*(I^{\otimes p},F)$ for various $F$'s, which are specific to ``the unstable case'' and we relate it to the classical Schur functor (Cor. 2.6). At last we establish quite surprising connection between ${\cal P}_{p,p-1}$ and the category of representations of the symmetric group 
$\Sigma_p$ at the level of $K$-theory (Cor. 2.7).

In Section 3 we establish equivalences between certain subcategories of
${\cal P}_{d,n}$'s, which have formal similarities with the Spanier-Whitehead
duality in stable homotopy theory. Our construction is somewhat surprising, since it provides a functorial interpretation of some classical computations of duals of representations which seemed not to fit into a categorical context.
\section{The category \( \mathcal{P}_{d,n} \)}
We refer to \cite{fs}, \cite{nick} and \cite{pirashvili} for the definition of strict polynomial functors, to \cite{abw} and \cite{weyman} for the definition of Schur and Weyl functors, to \cite{cps_hwc} for elementary properties of highest weight categories, to \cite{krause} for the structure of the category of strict polynomial functors of a given degree as a highest weight category and to \cite{fs} for the definition of the de Rham and Koszul complexes.

In this section we define the category of homogeneous strict polynomial functors of degree $d$ with bounded domain in the spirit of the Pirashvili
 and Kuhn reformulation of the definition of strict polynomial functors 
 (c.f. \cite{nick}, \cite{pirashvili}) and we discuss properties of that category. The category of strict polynomial functors of degree $d$ is denoted by $\mathcal{P}_d$.

Let \( \mathcal{V}ect_{\bf k}^{\leq n} \) be the category of ${\bf k}-$linear spaces with dimension at most \(n.\) We define the category \( \Gamma^d \mathcal{V}ect_{\bf k}^{\leq n} \) as follows. The objects of that category are the same as the objects of \( \mathcal{V}ect_{\bf k}^{\leq n} \) and for two linear spaces $V,W$ we set \( \operatorname{Hom}_{\Gamma^d \mathcal{V}ect_{\bf k}^{\leq n}}(V,W) = \Gamma^d\operatorname{Hom}_{\bf k}(V,W),\) where \( \operatorname{Hom}_{\bf k}(V,W) \) is the linear space of ${\bf k}$-linear maps between $V$ and $W.$ The composition of morphisms is determined by the natural identification \( \Gamma^d\operatorname{Hom}_{\bf k}(V,W) \simeq \operatorname{Hom}_{\bf k}(V^{\otimes d},W^{\otimes d})^{\Sigma_d} \). Finally, the category \( \mathcal{P}_{d,n} \) of homogeneous strict polynomial functors of degree $d$ with bounded domain \( \Gamma^d\mathcal{V}ect_{\bf k}^{\leq n} \) is defined as the category of ${\bf k}$-linear functors from \( \Gamma^d \mathcal{V}ect_{\bf k}^{\leq n} \) to the category of finite dimensional linear spaces \( \mathcal{V}ect_{\bf k} \). \( \mathcal{P}_{d,n} \) is an abelian category. The Kuhn duality $(-)^\#$ in $\mathcal{P}_{d,n}$ is defined as the restriction of the Kuhn duality in $\mathcal{P}_d$ to the category \( \Gamma^d \mathcal{V}ect_{\bf k}^{\leq n} \) and we define the tensor product in $\mathcal{P}_{d,n}$ in the same manner. Let \( \Gamma^{d,n}=\Gamma^d \circ \operatorname{Hom}({\bf k}^n, -)$ and $S^{d,n}= (\Gamma^{d,n})^{\#}.\)
\begin{prop}
\label{proj_gen}
\( \Gamma^{d,n} \) is a compact projective generator of \( \mathcal{P}_{d,n} \) and \( S^{d,n} \) is an injective cogenerator of \( \mathcal{P}_{d,n} \). In particular, \( \mathcal{P}_{d,n} \) has enough injective and projective objects.
\end{prop}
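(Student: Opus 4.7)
The plan is to pull everything back to the (enriched) Yoneda lemma for the small $\ka$-linear category $\Gamma^d \mathcal{V}ect_\ka^{\leq n}$. For any object $V$ of that category, the representable functor $\Gamma^d\operatorname{Hom}_\ka(V,-)$ satisfies
\[
\operatorname{Hom}_{\mathcal{P}_{d,n}}\bigl(\Gamma^d\operatorname{Hom}_\ka(V,-),\,F\bigr)\;\cong\;F(V)
\]
naturally in $F$; specializing to $V=\ka^n$ gives $\operatorname{Hom}_{\mathcal{P}_{d,n}}(\Gamma^{d,n},F)\cong F(\ka^n)$. From this identification, projectivity and compactness of $\Gamma^{d,n}$ are essentially immediate: limits and colimits in the functor category $\mathcal{P}_{d,n}$ are computed pointwise, so evaluation at $\ka^n$ is exact and commutes with all (in particular filtered) colimits.

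For the generator property I would exploit that $\ka^n$ is a ``largest'' object of $\mathcal{V}ect_\ka^{\leq n}$: any $V\in\mathcal{V}ect_\ka^{\leq n}$ admits an embedding $i\colon V\hookrightarrow\ka^n$ together with a retraction $r\colon\ka^n\twoheadrightarrow V$ with $ri=\operatorname{id}_V$. Applying $\Gamma^d$ yields $\Gamma^d(r)\circ\Gamma^d(i)=\operatorname{id}_{\Gamma^d V}$ in $\Gamma^d\mathcal{V}ect_\ka^{\leq n}$, so for any $F\in\mathcal{P}_{d,n}$ the space $F(V)$ is a direct summand of $F(\ka^n)$. Hence $F(\ka^n)=0$ forces $F\equiv 0$, i.e.\ $\operatorname{Hom}_{\mathcal{P}_{d,n}}(\Gamma^{d,n},F)=0\Rightarrow F=0$, which is exactly the generator condition. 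Existence of enough projectives then follows by the standard construction: for any $F$, choose a surjection $\bigoplus_{\alpha}\Gamma^{d,n}\twoheadrightarrow F$ corresponding (via Yoneda) to a choice of $\ka$-basis of $F(\ka^n)$ and use the retract argument again to see that this surjection remains surjective after evaluating at any $V$.

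The statements about $S^{d,n}=(\Gamma^{d,n})^{\#}$ are deduced formally by applying the Kuhn duality, which by construction is an exact contravariant involutive self-equivalence of $\mathcal{P}_{d,n}$. Such a functor interchanges projectives with injectives and generators with cogenerators, converting compactness into its dual co-compactness condition, so the already proved properties of $\Gamma^{d,n}$ transfer to $S^{d,n}$ automatically; enough injectives follows symmetrically. I do not foresee a substantive obstacle: the one place that demands a small argument beyond formal nonsense is the retraction trick showing that evaluation at $\ka^n$ is faithful, and everything else is a straightforward consequence of Yoneda, pointwise exactness, and Kuhn duality.
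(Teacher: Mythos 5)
Your proposal is correct and follows essentially the same route as the paper: the Yoneda isomorphism $\operatorname{Hom}_{\mathcal{P}_{d,n}}(\Gamma^{d,n},F)\simeq F(\ka^n)$ gives projectivity and compactness, the retract trick (every $\ka^m$, $m\le n$, is a retract of $\ka^n$ in $\Gamma^d\mathcal{V}ect_{\ka}^{\leq n}$) gives the generator property, and Kuhn duality transfers everything to $S^{d,n}$. The only cosmetic difference is that the paper phrases the generator condition as faithfulness of $\operatorname{Hom}(\Gamma^{d,n},-)$ applied to natural transformations, while you use the equivalent (for a projective object) criterion $\operatorname{Hom}(\Gamma^{d,n},F)=0\Rightarrow F=0$.
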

\begin{proof}
By the Yoneda lemma
\begin{equation}
\label{yoneda_projective_eq}
\operatorname{Hom}_{\mathcal{P}_{d,n}}(\Gamma^{d,n}, F) \simeq F({\bf k}^n),
\end{equation}
hence \( \Gamma^{d,n} \) is a projective functor. We also see from (\ref{yoneda_projective_eq}) that \( \Gamma^{d,n} \) is compact. It remains to prove that \( \Gamma^{d,n} \) is a generator of \( \mathcal{P}_{d,n} \), i.e. \( \operatorname{Hom}_{\mathcal{P}_{d,n}}(\Gamma^{d,n},-) \) is a faithful functor. By the Yoneda lemma it is sufficient to show that if \(\eta:F\to G\) is a natural transformation such that $\eta({\bf k}^n)=0$ then $\eta=0$, but it follows from the fact that for any $m<n$, ${\bf k}^m$ is a retract of
${\bf k}^n$ in $\mathcal{V}ect_{\bf k}^{\leq n}$, hence also in 
$\Gamma^d \mathcal{V}ect_{\bf k}^{\leq n}$.

By the Kuhn duality \( S^{d,n} \) is an injective cogenerator of \( \mathcal{P}_{d,n} \). The last statement is evident from the first part of the proposition.
\end{proof}
In the sequel \( \operatorname{GL}_n=\operatorname{GL}_n({\bf k}) \) is regarded as an algebraic group over ${\bf k}$. Let $\operatorname{GL}_n^\text{rat}$-mod be the category of rational left \( \operatorname{GL}_n$-modules. We denote by $\operatorname{GL}^{\text{Pol},d}_n$-mod the category of finite dimensional homogeneous polynomial $\operatorname{GL}_n$-modules of degree $d$. Let $S(n,d)$-mod be the category of finite dimensional left modules over the Schur algebra $S(n,d)$.
\begin{theorem}
\label{schur_equiv_prop}
There are equivalences of abelian categories \( \mathcal{P}_{d,n} \simeq S(n,d) \)-mod and \(\mathcal{P}_{d,n} \simeq \operatorname{GL}^{\text{Pol},d}_n\)-mod . In particular, \( \mathcal{P}_{d,n}\simeq \mathcal{P}_d \) for \( n\geq d \).
\end{theorem}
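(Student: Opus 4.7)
The plan is to apply the classical Morita-style recognition theorem (sometimes attributed to Gabriel--Mitchell or Freyd): an abelian category in which a chosen object $P$ is a compact projective generator is equivalent to the category of modules over $\operatorname{End}(P)^{op}$ (with finite-dimensionality of values corresponding to finite-dimensionality of modules). Proposition \ref{proj_gen} has already exhibited such a $P$, namely $\Gamma^{d,n}$, so the whole theorem reduces to identifying the endomorphism ring of $\Gamma^{d,n}$ with (the opposite of) the Schur algebra.

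First I would compute $\operatorname{End}_{\mathcal{P}_{d,n}}(\Gamma^{d,n})$. Applying the Yoneda isomorphism (\ref{yoneda_projective_eq}) to $F=\Gamma^{d,n}$ yields
\begin{equation*}
\operatorname{End}_{\mathcal{P}_{d,n}}(\Gamma^{d,n}) \simeq \Gamma^{d,n}({\bf k}^n) = \Gamma^d\operatorname{Hom}_{{\bf k}}({\bf k}^n,{\bf k}^n),
\end{equation*}
and the canonical isomorphism $\Gamma^d\operatorname{Hom}_{{\bf k}}(V,W) \simeq \operatorname{Hom}_{{\bf k}}(V^{\otimes d},W^{\otimes d})^{\Sigma_d}$ built into the very definition of composition in $\Gamma^d\mathcal{V}ect^{\leq n}_{{\bf k}}$ identifies the right-hand side with $\operatorname{End}_{\Sigma_d}(({\bf k}^n)^{\otimes d})=S(n,d)$. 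That the Yoneda composition of natural transformations matches the Schur-algebra product (up to the opposite-algebra convention) is precisely the content of the composition law in $\Gamma^d\mathcal{V}ect^{\leq n}_{{\bf k}}$.

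Second, with $\operatorname{End}(\Gamma^{d,n})\simeq S(n,d)$ in hand, the evaluation functor
\begin{equation*}
\mathcal{P}_{d,n} \longrightarrow S(n,d)\text{-mod},\qquad F\longmapsto \operatorname{Hom}_{\mathcal{P}_{d,n}}(\Gamma^{d,n},F)\simeq F({\bf k}^n)
\end{equation*}
is the desired equivalence: fully faithfulness follows from (\ref{yoneda_projective_eq}) together with Proposition \ref{proj_gen}, and essential surjectivity from the fact that every $F\in\mathcal{P}_{d,n}$ admits a presentation by finitely many copies of $\Gamma^{d,n}$ (since $\Gamma^{d,n}$ is a compact projective generator and $F({\bf k}^n)$ is finite-dimensional). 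The equivalence $S(n,d)\text{-mod}\simeq \operatorname{GL}^{\text{Pol},d}_n\text{-mod}$ is then classical and I would simply quote it from Green's lectures on polynomial representations of $\operatorname{GL}_n$. For the concluding statement, when $n\geq d$ the equivalence $\mathcal{P}_{d,n}\simeq\mathcal{P}_d$ is obtained by combining what we have just proved with \cite[Thm.~3.2]{fs}, which provides $\mathcal{P}_d\simeq S(n,d)\text{-mod}$ in exactly this range.

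The argument is essentially formal and I do not expect any serious obstacle; the only genuine computation is the identification of the endomorphism algebra of $\Gamma^{d,n}$ with the Schur algebra, and this is forced by the definitions once one keeps track of the opposite-algebra convention that appears whenever one passes between endomorphisms of a projective generator and left modules.
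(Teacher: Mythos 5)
Your proposal is correct and follows essentially the same route as the paper: invoke the Morita/Gabriel recognition theorem with the compact projective generator $\Gamma^{d,n}$ from Proposition \ref{proj_gen}, identify $\operatorname{End}_{\mathcal{P}_{d,n}}(\Gamma^{d,n})\simeq \Gamma^{d,n}({\bf k}^n)\simeq S(n,d)$ via the Yoneda isomorphism (\ref{yoneda_projective_eq}), and then quote the classical equivalence $S(n,d)\text{-mod}\simeq \operatorname{GL}^{\text{Pol},d}_n\text{-mod}$ and \cite[Thm.~3.2]{fs} for the case $n\geq d$. The only cosmetic differences are that the paper cites the Gabriel theorem from Bass rather than verifying full faithfulness and essential surjectivity by hand, and it references Martin rather than Green for the polynomial-representations equivalence.
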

\begin{proof}
By Proposition 1.1 and the Gabriel theorem (c.f. \cite[II Thm. 1.3]{bass}) we obtain the equivalence \[ \mathcal{P}_{d,n} \simeq \operatorname{End}_{\mathcal{P}_{d,n}}(\Gamma^{d,n})-\text{mod}. \]It follows from (\ref{yoneda_projective_eq}) that \[ \operatorname{End}_{\mathcal{P}_{d,n}}(\Gamma^{d,n})\simeq \Gamma^{d,n}({\bf k}^n)=\Gamma^d (\operatorname{End}({\bf k}^n))\simeq S(n,d), \]hence 
\begin{equation}
\mathcal{P}_{d,n} \simeq \operatorname{End}_{\mathcal{P}_{d,n}}(\Gamma^{d,n})-\text{mod} \simeq S(n,d)-\text{mod}.
\end{equation}
It immediately implies the last statement, because \( \mathcal{P}_d \simeq S(n,d)\)-mod for \(n\geq d\) (c.f. \cite[Thm. 3.2]{fs}).
The second equivalence in Theorem follows from the first one and the well-known equivalence \(S(n,d)-\text{mod}\simeq \operatorname{GL}^{\text{Pol},d}_n-\text{mod}\) (c.f. \cite[Thm. 2.2.7]{martin}). 
\end{proof}
\begin{cor}
\( \mathcal{P}_{d,n} \) is a highest weight category with poset $\Lambda(d,n)$ of Young diagrams of weight $d$ and with at most $n$ columns, with the partial order on $\Lambda(d,n)$ being the reversed dominance order.
\end{cor}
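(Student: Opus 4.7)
The plan is to transport the classical highest weight structure on $S(n,d)\text{-mod}$ across the equivalence of Theorem \ref{schur_equiv_prop}. It is a classical result, due to Donkin and fitting into the axiomatic framework of Cline-Parshall-Scott \cite{cps_hwc} (see also \cite{martin}), that the Schur algebra $S(n,d)$ is quasi-hereditary and carries a highest weight structure indexed by partitions of $d$ with the appropriate bound depending on $n$, with standard objects being the Weyl modules $V(\lambda)$ and costandard objects the induced modules $H^0(\lambda)$. Since the structure of a highest weight category is preserved by any equivalence of abelian categories, Theorem \ref{schur_equiv_prop} transports this structure to $\mathcal{P}_{d,n}$ automatically, endowing it with a poset, standards, costandards, simples and projective covers.

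Concretely, I would proceed in three steps. First, invoke Theorem \ref{schur_equiv_prop} to identify $\mathcal{P}_{d,n}$ with $S(n,d)\text{-mod}$ and quote the classical quasi-heredity. Second, trace through the equivalence to give functorial descriptions: the standards correspond to the Weyl functors and the costandards to the Schur functors (cf.\ \cite{abw}, \cite{weyman}) viewed as objects of $\mathcal{P}_{d,n}$ by restriction to $\mathcal{V}ect_{\bf k}^{\leq n}$, and the simples are their unique irreducible quotients, respectively subobjects. Third, reconcile the combinatorial labelling: under the Pirashvili-Kuhn reformulation used to define $\mathcal{P}_{d,n}$ (in which $\Gamma^{d,n}$, rather than $S^{d,n}$, is singled out as the projective generator), the simples of $\mathcal{P}_{d,n}$ are naturally indexed by Young diagrams with at most $n$ columns, and the reversed dominance order corresponds to the heredity chain coming from the Schur-algebra side. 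The bijection between the two labelling conventions is transposition of Young diagrams, which is an order-reversing involution for the usual dominance order.

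I expect the only step requiring any genuine care to be the last one, namely the matching of the combinatorial conventions and the verification that the transposition interacts correctly with the order so as to yield exactly the stated poset $\Lambda(d,n)$ under reversed dominance. The homological substance is already entirely contained in Theorem \ref{schur_equiv_prop} together with the classical quasi-heredity of $S(n,d)$, so no new calculation is needed beyond this bookkeeping.
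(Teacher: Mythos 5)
Your proposal is correct and follows essentially the same route as the paper: the paper's proof simply transports the well-known highest weight structure on $S(n,d)$-mod (citing \cite[Thm.\ 4.1]{parshall}) across the equivalence of Theorem \ref{schur_equiv_prop}, and your bookkeeping of the labelling conventions (columns vs.\ rows, reversed dominance) is exactly the point the paper addresses in the remark immediately following the corollary.
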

\begin{proof}
The assertion follows immediately from Theorem \ref{schur_equiv_prop} and the well-known fact that \( S(n,d) \)-mod is a highest weight category (c.f. \cite[Thm. 4.1]{parshall}).
\end{proof}

{\bf Remark.} An unfortunate fact that the order is the reversed dominance order  is caused by the fact that we label the family of Schur functors as in 
\cite{abw}, \cite{weyman} and simultaneously follow the general conventions concerning highest weight categories (see e.g \cite{cps_hwc}).

Now we recall the standard definitions and facts about the algebraic group 
$\operatorname{GL}_n$ (see e.g. \cite{jantzen}). Let $T_n\subset \operatorname{GL}_n$ be the standard maximal torus of $\operatorname{GL}_n$, i.e. the subgroup of diagonal matrices. Let $B_n$ be a Borel subgroup of $\operatorname{GL}_n$ consisting of upper triangular matrices. The weight lattice of $\operatorname{GL}_n$, i.e. the group $X(T_n)$ of rational characters of $T_n$, is identified with the group $\mathbb{Z}^n$. Elements of $X(T_n)$ are called weights. For $M\in \operatorname{GL}^{\text{rat}}_n$-mod and $\lambda \in X(T_n)$ the weight space $M_\lambda$ is the linear space $M_\lambda=\{m\in M: tm=\lambda(t)m \text{ for all } t\in T_n\}$. A weight $\lambda\in X(T_n)$ is a weight of $M$ if and only if $M_\lambda \neq 0$. We have a decomposition $M=\bigoplus_{\lambda\in X(T_n)} M_\lambda$. The poset of dominant weights $X(T_n)^+\subset X(T_n)$ is the set $X(T_n)^+=\{\lambda:\langle \lambda,\alpha \rangle \geq 0 \text{ for all } \alpha\in \Phi^+\}=\{\lambda=(\lambda_1,\ldots,\lambda_n): \lambda_1\geq \ldots \geq	\lambda_n\}$ with the partial order defined as follows: $\lambda\geq \mu$ if and only if $\lambda-\mu=\sum_{1\leq i<j \leq n}c_{ij}(e_i-e_j)$ with $c_{ij}\leq 0$. Here, $\Phi^+=\{e_i-e_j:1\leq i<j \leq n\}\subset X(T_n)$, where $e_i$ is the $i$-th element of the standard basis of $X(T_n)$, is the system of positive roots corresponding to the Borel subgroup $B_n$. Let us note that the partial order given above is the reversed standard partial order on the set of dominant weights. A rational $\operatorname{GL}_n$-module $M$ is polynomial if and only if for all the weights $\lambda=(\lambda_1,\ldots,\lambda_n)$ of $M$ each $\lambda_i$ satisfies $\lambda_i\geq 0$ (c.f. \cite[Prop. 3.5]{fs}). The category $\operatorname{GL}^{\text{rat}}_n$-mod with the poset $X(T_n)^+$ is a highest weight category (c.f. \cite[Example 6.2]{ps}). 

Now we observe that
$\Lambda(d,n)$ is an order ideal of $X(T_n)^+$, hence by \cite[Thm. 3.9(a)]{cps_hwc}] the inclusion functor
\[ ev: {\cal P}_{d,n}\ra \operatorname{GL}^{\text{rat}}_n-\operatorname{mod}
\]
given explicitly as: $ev(F):=F({\bf k}^n)$,
and so its derived functor are full embeddings.
In particular, we have the isomorphisms of Ext-groups:
\[\operatorname{Ext}^*_{\mathcal{P}_{d,n}}(F,G) \simeq \operatorname{Ext}^*_{\operatorname{GL}^{\text{rat}}_n-\text{mod}}(F({\bf k}^n),G({\bf k}^n))
\] for any $F,G\in \mathcal{P}_{d,n}$
(see also  \cite[Cor. 3.12.1]{fs}, where it was re-proved without referring to \cite{cps_hwc}).

 Finally, let us recall that any finite dimensional rational $\operatorname{GL}_n$-module $M$ is of the form $M\simeq F({\bf k}^n)\otimes \det^r$ for some $F\in \mathcal{P}_{d,n}$ and a non-positive integer $r$, where $\det^r=(\Lambda^n({\bf k}^n))^{\otimes r}$ if $r>0$, $\det^0=id$ and $\det^r=((\Lambda^n({\bf k}^n))^{\otimes r})^*$ otherwise (c.f.\cite[Prop. 2.2.1]{weyman}).

Now we describe the categories $\mathcal{P}_{d,n}$ and $\mathcal{D}^b\mathcal{P}_{d,n}$ in terms of recollements of abelian and triangulated categories.
\begin{theorem}
\label{hwc_struct_bounded_thm}
Denote the full subcategory of \(\mathcal{P}_d \) consisting of strict polynomial functors of degree $d$, which assign the zero space to linear spaces with dimension at most $n$ by $\mathcal{P}^{> n}_d$. Let $i_*:\mathcal{P}^{> n}_d \to \mathcal{P}_d$ be the inclusion functor and let $j^*:\mathcal{P}_d\to \mathcal{P}_{d,n}$ be the exact functor, which restricts a given strict polynomial functor of degree $d$ to the category \( \Gamma^d\mathcal{V}ect_{\bf k}^{\leq n} \).
\begin{enumerate}
\item There is a recollement of abelian categories
\begin{equation*}
\begin{tikzcd}[row sep=huge, column sep=huge]
\mathcal{P}^{> n}_d \ar[r, "i_*"] & \mathcal{P}_d \ar[l, swap, bend left=30, "i^!"] \ar[l, swap, bend right=30, "i^*"] \arrow{r}{j^*} &  \mathcal{P}_{d,n} \ar[l, swap, bend left=30, "j_*"] \ar[l, swap, bend right=30, "j_!"]
\end{tikzcd}
\end{equation*}
and the recollement of triangulated categories induced by the recollement given above on the level of bounded derived categories
\begin{equation*}
\begin{tikzcd}[row sep=huge, column sep=huge]
\mathcal{D}\mathcal{P}^{> n}_d \ar[r, "i_*"] & \mathcal{D}\mathcal{P}_d \ar[l, swap, bend left=30, "Ri^!"] \ar[l, swap, bend right=30, "Li^*"] \arrow{r}{j^*} & \mathcal{D}\mathcal{P}_{d,n} \ar[l, swap, bend left=30, "Rj_*"] \ar[l, swap, bend right=30, "Lj_!"].
\end{tikzcd}
\end{equation*}
\item 
Let \( \lambda \in \Lambda(d,n) \). The costandard (resp. standard, simple) object corresponding to \( \lambda \) in $\mathcal{P}_{d,n}$ is the functor \(j^*(S_\lambda)\) (resp. \(j^*(W_{\lambda}) \), \(j^*(F_{\lambda})\)). Thus, costandard and standard functors in \( \mathcal{P}_{d,n} \) will be also called, respectively, Schur and Weyl functors and will be denoted by \( S_\lambda \) and \(W_\lambda\). The simple functor in $\mathcal{P}_{d,n}$ corresponding to $\lambda\in \Lambda(d,n)$ will be denoted by $F_\lambda$.
\item Let \( \lambda \in \Lambda(d,n) \). Then 
$j_*(S_\lambda)\simeq  Rj_*(S_{\lambda})\simeq   S_\lambda$ and $j_!(W_\lambda)\simeq Lj_!(W_{\lambda})\simeq W_\lambda.$\\ The functor $j_*$
(resp. $j_!$) preserves injective envelopes (resp. projective covers).
\end{enumerate}
\end{theorem}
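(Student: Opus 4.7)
The plan is to derive all three parts from the theory of Serre quotients of highest weight categories by coideals, with \cite[Thm.~3.9]{cps_hwc} as the main external input.

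For part (1), the functor $j^*$ is the restriction along the fully faithful inclusion $\Gamma^d\mathcal{V}ect_{\bf k}^{\leq n}\hookrightarrow \Gamma^d\mathcal{V}ect_{\bf k}$, so $j_!$ and $j_*$ exist as its left and right Kan extensions and automatically satisfy $j^*j_!\simeq j^*j_*\simeq \mathrm{id}$. Cocontinuity of $j_!$ reduces the verification $j_!(\mathcal{P}_{d,n})\subseteq \mathcal{P}_d$ to the Yoneda computation $j_!\Gamma^{d,n}\simeq \Gamma^{d,n}$ (now viewed in $\mathcal{P}_d$), and the dual statement for $j_*$ follows by Kuhn duality. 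The inclusion $i_*$ of $\mathcal{P}_d^{>n}=\ker j^*$ is fully faithful, with adjoints $i^*:=\operatorname{coker}(j_!j^*\to\mathrm{id})$ and $i^!:=\ker(\mathrm{id}\to j_*j^*)$. Passage to bounded derived categories uses that $\mathcal{P}_d$, $\mathcal{P}_{d,n}$, and $\mathcal{P}_d^{>n}$ all have finite global dimension (finite-poset highest weight categories), so all six derived functors are bounded, together with the full faithfulness of $Rj_*$ and $Lj_!$ established in part (3).

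For part (2), the combinatorial key is that $\Lambda(d,n)$ is an ideal in the reversed dominance order on $\Lambda(d)$: if $\lambda\in\Lambda(d,n)$ has at most $n$ nonzero parts and $\mu$ dominates $\lambda$ classically, then comparing partial sums at index $n$ forces $\sum_{i=1}^n\mu_i=d$, so $\mu$ also has at most $n$ nonzero parts. Hence $\mathcal{P}_d^{>n}$ is the highest-weight Serre subcategory cut out by the coideal $\Lambda(d)\setminus\Lambda(d,n)$, and by \cite[Thm.~3.9]{cps_hwc} the quotient $\mathcal{P}_d/\mathcal{P}_d^{>n}$ inherits a highest weight structure whose standard, costandard, and simple objects for $\lambda\in\Lambda(d,n)$ are the $j^*$-images of $W_\lambda$, $S_\lambda$, $F_\lambda$. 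Identifying this quotient with $\mathcal{P}_{d,n}$ through $j^*$ (both being equivalent to $S(n,d)$-mod by Theorem~\ref{schur_equiv_prop}) completes the argument.

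For part (3), the recollement triangle $i_*Ri^!\to\mathrm{id}\to Rj_*j^*$ shows that $Rj_*S_\lambda\simeq S_\lambda$ is equivalent to $\operatorname{Ext}^*_{\mathcal{P}_d}(F,S_\lambda)=0$ for every $F\in\mathcal{P}_d^{>n}$ and all $*\geq 0$. In degree zero this is immediate: the socle $F_\lambda$ of $S_\lambda$ is indexed by $\Lambda(d,n)$, whereas every composition factor of $F$ lies outside, so no nonzero map $F\to S_\lambda$ can exist. The higher vanishing is the content of the derived full embedding of \cite[Thm.~3.9]{cps_hwc}; concretely, it follows from the Ext-orthogonality $\operatorname{Ext}^*_{\mathcal{P}_d}(W_\mu,S_\lambda)=0$ for $\mu\neq\lambda$ applied to a standard-filtered projective resolution of $F$, all of whose $\Delta$-factors $W_\mu$ are supported on the coideal by BGG reciprocity. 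The $Lj_!W_\lambda\simeq W_\lambda$ statement is dual via Kuhn duality. Finally, preservation of injective envelopes by $j_*$ follows because $j_*I_{F_\lambda}^n$ is injective in $\mathcal{P}_d$ (as $j^*$ is exact) with simple socle $F_\lambda$: by the adjunction, $\operatorname{Hom}_{\mathcal{P}_d}(F_\mu, j_*I_{F_\lambda}^n)\simeq \operatorname{Hom}_{\mathcal{P}_{d,n}}(j^*F_\mu, I_{F_\lambda}^n)$, which equals $\delta_{\mu,\lambda}$ for $\mu\in\Lambda(d,n)$ and vanishes for $\mu\notin\Lambda(d,n)$, so $j_*I_{F_\lambda}^n$ must be the injective envelope of $F_\lambda$ in $\mathcal{P}_d$. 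The projective-cover statement for $j_!$ is dual.

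The main technical obstacle is the higher-degree vanishing $R^i j_* S_\lambda=0$ for $i>0$ in part (3); this goes beyond the abelian recollement and genuinely relies on the highest-weight structure of $\mathcal{P}_d$. Everything else is either a formal consequence of the recollement axioms or a direct invocation of \cite[Thm.~3.9]{cps_hwc}.
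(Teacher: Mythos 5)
Your overall strategy (recollement formalism plus the CPS highest weight machinery) is the same as the paper's, and parts of your argument are fine or even cleaner than the paper's: the reduction of $Rj_*(S_\lambda)\simeq S_\lambda$ to $Ri^!(S_\lambda)=0$, i.e.\ to $\operatorname{Ext}^*_{\mathcal{P}_d}(F,S_\lambda)=0$ for all $F\in\mathcal{P}_d^{>n}$, and the socle argument showing that $j_*$ of the injective envelope of a simple is again an injective envelope (the paper instead shows $Rj_*(S_\mu)$ is good and counts its good filtration via $\dim\operatorname{Hom}(W,j_*S_\mu)=1$, and proves envelope preservation by a summand/contradiction argument). However, your justification of the higher vanishing $\operatorname{Ext}^{>0}_{\mathcal{P}_d}(F,S_\lambda)=0$ is wrong. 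You claim $F\in\mathcal{P}_d^{>n}$ has a projective resolution in $\mathcal{P}_d$ all of whose Weyl factors are indexed outside $\Lambda(d,n)$, "by BGG reciprocity". BGG reciprocity says the opposite: $(P(\mu):W_\nu)=[S_\nu:F_\mu]$, and this is nonzero for $\mu$ outside $\Lambda(d,n)$ and $\nu$ inside it already in degree $0$ of the resolution. Concretely, for $d=p$ and $\mu=(n+1,1^{p-n-1})$, $\nu=(n,1^{p-n})$ one has $[S_\nu:F_\mu]=1$ (the hook block decomposition matrix, Thm.~2.1(1)), so $P(F_\mu)$ contains the factor $W_\nu$ with $\nu\in\Lambda(p,n)$; higher terms of the resolution are even worse. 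The $\Delta$/$\nabla$-orthogonality therefore only tells you that each $\operatorname{RHom}(P_i,S_\lambda)$ is concentrated in degree $0$, not that the resulting complex is acyclic in positive degrees. Nor is the vanishing "the content of the derived full embedding" of \cite[Thm.~3.9]{cps_hwc}: that full embedding compares Ext-groups between two objects of $\mathcal{P}_d^{>n}$, whereas here $S_\lambda$ lies outside the subcategory. A correct elementary fix is an induction: for $F=F_\mu$ with $\mu\notin\Lambda(d,n)$ use $0\to K\to W_\mu\to F_\mu\to 0$, where $K$ again lies in $\mathcal{P}_d^{>n}$ because the relevant weight set is downward closed; then $\operatorname{Ext}^i(F_\mu,S_\lambda)$ is a quotient of $\operatorname{Ext}^{i-1}(K,S_\lambda)$ since $\operatorname{Ext}^*(W_\mu,S_\lambda)=0$, and one inducts on $i$ and on composition length, the case $i=0$ being your socle argument. (The paper avoids this issue entirely by its good-filtration count.)

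Two further points. First, your part (1) is partially circular and omits the one non-formal ingredient: you defer "full faithfulness of $Rj_*$ and $Lj_!$" to part (3), while part (3) uses the recollement triangle from part (1); in fact $j^*Rj_*\simeq\mathrm{id}$ is formal from exactness of $j^*$, but what genuinely needs an input is that $\mathcal{D}^b(\mathcal{P}_d^{>n})\to\mathcal{D}^b(\mathcal{P}_d)$ is a full embedding and identifies with $\ker(j^*)$ on derived categories — this is exactly where the paper invokes \cite[Thm.~3.9(a)]{cps_hwc} together with finite global dimension and \cite[Thm.~7.2(iii)]{psaroudakis}, and your sketch never supplies it. Second, your combinatorial step in part (2) mixes conventions: with the paper's ABW labelling, $\Lambda(d,n)$ consists of diagrams with at most $n$ \emph{columns} and, in the reversed dominance order, it is the \emph{coideal}, while the kernel weights $\{\lambda:\lambda_1>n\}$ form the ideal to which the CPS subcategory theorem is applied; your partial-sum argument proves the conjugate (row) statement. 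The structural conclusion you draw is the right one, but as written the verification does not apply to the paper's $\Lambda(d,n)$, and getting the ideal/coideal direction right is precisely what makes both the CPS citation in part (2) and the induction fixing part (3) work. Your treatment of injective envelopes also covers only envelopes of simples; it extends to arbitrary $F$ by noting that $j_*$ preserves socles, but that step should be said.
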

\begin{proof}
The first part of (1) follows from \cite[Example 2.13]{psaroudakis}. We observe that $\mathcal{P}_d^{>n}$ consists of strict polynomial functors of degree $d$ with composition factors $F_\lambda$, where $\lambda$ is a Young diagram of weight $d$ and with at least $n+1$ columns. Then the inclusion functor $\mathcal{D}(\mathcal{P}_d^{>n}) \to \mathcal{D}(\mathcal{P}_d)$ is a full embedding by \cite[Thm. 3.9(a)]{cps_hwc}]. The categories \( \mathcal{P}_d^{>n}, \mathcal{P}_d \) and \( \mathcal{P}_{d,n} \) have finite global dimensions as highest weight categories with finite posets. Thus, the second part of (1) follows from \cite[Thm. 7.2(iii)]{psaroudakis}. The statement (2) follows from (1) and \cite[Lem. 1.4(b)]{cps_duality}.

Now we turn to the proof of (3). Fix $\mu \in \Lambda(d,n).$ By the right adjointness of $Rj_*$ to $j^*$ and part (2) we have \[ \operatorname{Hom}_{\mathcal{DP}_d}(W_\lambda,Rj_*(S_\mu)[t])\simeq\operatorname{Ext}^t_{\mathcal{P}_{d,n}}(j^*W_\lambda,S_\mu)\simeq\operatorname{Ext}^t_{\mathcal{P}_{d,n}}(W_\lambda,S_\mu)=0 \]for any $t\neq 0$, $\lambda \in \Lambda(d,n).$ Therefore $Rj_*(S_{\lambda})\simeq j_*S_\mu$ and it  is a good object (i.e. it has a filtration with the quotients being   Schur functors). Let us recall that  in any highest weight category ${\cal C}$ with poset $I$, for any $x,y\in I$  
we have ${\rm Hom}_{{\cal C}}(W_{x},S_{y})=\delta_{xy}$. Therefore
for a good $X\in\mathcal{P}_d$  we have \( \ell(X)=\dim\operatorname{Hom}_{\mathcal{P}_d}(W,X),\) where $\ell(X)$ is the length of a corresponding good filtration of $X$ and $W=\bigoplus_{\lambda\in \Lambda(d,n)}W_\lambda$. By the adjointness  and part (2) again, we obtain
\begin{equation*}
\ell(j_*(S_\mu))=\dim\operatorname{Hom}_{\mathcal{P}_d}(W,j_*(S_\mu))=\dim\operatorname{Hom}_{\mathcal{P}_{d,n}}(j^*(W),S_\mu)=\dim\operatorname{Hom}_{\mathcal{P}_{d,n}}(W,S_\mu)=
\end{equation*}
\begin{equation*}
\dim\operatorname{Hom}_{\mathcal{P}_{d,n}}(W_{\mu},S_\mu)=
1,
\end{equation*}
In consequence, $j_*(S_\mu)\simeq S_\mu$. The functor $j_*$ preserves injectives, because it has an exact left adjoint. Let $G\in{\cal P}_{d,n}$ be the injective envelope of  $F\in{\cal P}_{d,n}$. Then, since $j_*$ preserves monomorphisms, we have an embedding $j_*(F)\subset j_*(G)$. Suppose that
$j_*(G)$ is not the injective envelope of $j_*(F)$, i.e. there exists an injective $H\in{\cal P}_d$
such that  $j_*(F)\subset H\subset j_*(G)$. Since $H$ is injective, it is a direct summand of $j_*(G)$.
By applying $j^*$  we get
$F\subset j^*(H) \subset G$, since $j^* j_*\simeq id$. This gives a contradiction with the minimal property of envelope, because $j^*(H)$ is a direct summand of $G$, hence it is injective. This shows that $j_*$ preserves injective envelopes.

 The proofs of the respective facts for $j_!(W_\lambda)$ are analogous.
\end{proof}
We end the section by recalling the block structure of the Schur algebra \( S(n,d) \) (c.f. \cite{donkin}). By Theorem \ref{schur_equiv_prop} this is also the block structure of \( \mathcal{P}_{d,n} \). For \( \lambda\in \Lambda(d,n) \) with the conjugate Young diagram $\widetilde{\lambda}$ of the form \( \widetilde{\lambda}=(\lambda'_1,\ldots,\lambda'_n) \) we define \[ \alpha(\lambda)=\max\left\lbrace r \geq 0: \forall_{1\leq i \leq n-1} \ \lambda'_i-\lambda'_{i+1} \equiv -1 \ \left( \text{mod} \ p^r\right) \right\rbrace. \] Then, identifying a block with a subset of \( \Lambda(d,n) \), \(\lambda\) and \(\mu\) are in the same block of \( \mathcal{P}_{d,n}\) if and only if \( \lambda \) and \(\mu\) have the same \(p\)-core and \( \alpha(\lambda)=\alpha(\mu) \).

\section{The homological algebra in $\mathcal{P}_{p,n}$}
It is well-known that if $d<p$ then $S(n,d)$ is a semisimple algebra (c.f. \cite[Thm. 2.2.8]{martin}), hence $\mathcal{P}_{d,n}$ is a semisimple category for $d<p$. Thus, the first non-trivial case from the point of view of homological algebra is $d=p$. If $n\geq p$ then \( \mathcal{P}_{p,n} \simeq \mathcal{P}_p \) by Theorem \ref{schur_equiv_prop}. Therefore we assume in the sequel that $n<p$ and we call this situation {\em unstable}. Let us observe that $\alpha(\lambda)=0$ for any $\lambda\in \Lambda(p,n)$. It is also easily seen that each diagram in $\Lambda(p,n)$ not being a $p$-hook is a $p$-core and all $p$-hooks in $\Lambda(p,n)$ have the same $p$-core, namely the empty set. As the consequence, the only non-trivial homological computations appear in the block of $\mathcal{P}_{p,n}$ corresponding to the subset of $\Lambda(p,n)$ consisting of $p$-hooks, which will be denoted by $\mathcal{P}_{p,n}^{\varnothing}$. Set $S_i=S_{(i+1,1^{p-i-1})}, W_i=W_{(i+1,1^{p-i-1})}$ and $F_i=F_{(i+1,1^{p-i-1})}$ for $0\leq i \leq n-1$. 
 
We start with collecting the facts which are formal consequences of the recollement between ${\cal P}_p$ and ${\cal P}_{p,n}$  described in
Section 1 and the corresponding properties of ${\cal P}_p$ established in
\cite{pj}.
\begin{theorem}[]~
\label{homological_results}
\begin{enumerate}
\item
The decomposition matrix $D=(d_{ij})\in M_{n\times n}(\mathbb{Z})$ of $\mathcal{P}^{\varnothing}_{p,n}$ is given by \[d_{ij}=[W_{i-1}:F_{j-1}]=\begin{cases} 1 \qquad & if \ j=i \ or \ j=i+1 \\ 0 \qquad & otherwise \end{cases}.\]
\item Let $0\leq i,j \leq n-1.$ Then:
\begin{equation*}
\operatorname{Ext}^q(F_i,S_j)=\begin{cases} {\bf k} \quad if \ j\geq i \ and \ q=j-i \\ 0 \quad otherwise \end{cases}.
\end{equation*}
\item The category $\mathcal{P}_{p,n}$ has a Kazhdan-Lusztig theory relative to the function $l:\Lambda(p,n)\to \mathbb{Z}$ given by $l((i+1,1^{p-i-1}))=i$ for $0\leq i \leq n-1$ and $l(\lambda)=0$ for $\lambda$ not being a $p$-hook.
\item \begin{equation*}
\operatorname{Ext}^q(F_i,F_j)=\begin{cases} {\bf k} \quad if \ q=|i-j|+2r, \ where \ 0\leq r \leq n-\max\{i,j\}-1, \\ 0 \quad otherwise \end{cases}.
\end{equation*}
\item \begin{equation*}
\operatorname{Ext}^q(S_i,S_j)=\begin{cases} {\bf k} \quad if \ (i=j \land q=0) \lor (j>i \land (q=j-i-1 \lor q=j-i)) \\ 0 \quad otherwise \end{cases}.
\end{equation*}
\end{enumerate}
\end{theorem}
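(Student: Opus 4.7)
The plan is to transport each statement from the corresponding stable result in $\mathcal{P}_p$ (established in \cite{pj}) via the recollement of Theorem \ref{hwc_struct_bounded_thm}. The general principle is that the exact functor $j^*$ kills precisely the simples $F_\lambda$ with $\lambda \notin \Lambda(p,n)$, while by Theorem \ref{hwc_struct_bounded_thm}(3) the functors $Rj_*$ and $Lj_!$ fix costandard and standard objects respectively.

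Part (1) is then immediate: applying the exact functor $j^*$ to the Weyl-module composition series in $\mathcal{P}_p^{\varnothing}$ (where $W_i$ has composition factors $F_i$ and $F_{i+1}$ for $i \leq p-2$ and $W_{p-1} = F_{p-1}$) gives the claimed bidiagonal matrix truncated to $\Lambda(p,n)$, since the $F_\lambda$ with $\lambda \notin \Lambda(p,n)$ disappear. For (2) and (5), the adjunction $j^* \dashv Rj_*$ together with $Rj_*(j^*S_\mu) \simeq S_\mu$ (Theorem \ref{hwc_struct_bounded_thm}(3)) yields, for any $X \in \mathcal{P}_p$ and $\mu \in \Lambda(p,n)$,
\[
\operatorname{Ext}^*_{\mathcal{P}_{p,n}}(j^*X, S_\mu) \;\simeq\; \operatorname{Hom}_{\mathcal{D}\mathcal{P}_p}\bigl(X,\, Rj_* j^* S_\mu\,[*]\bigr) \;\simeq\; \operatorname{Ext}^*_{\mathcal{P}_p}(X, S_\mu).
\]
Taking $X = F_i$ gives (2) and $X = S_i$ gives (5), after quoting the corresponding formulas in $\mathcal{P}_p$ from \cite{pj}.

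For (3), the Kazhdan--Lusztig axioms amount to parity conditions on $\operatorname{Ext}^q(F_\lambda, S_\mu)$ and on its Kuhn dual $\operatorname{Ext}^q(W_\mu, F_\lambda)$. By (2) we have $\operatorname{Ext}^q(F_i, S_j) \neq 0$ only for $q = j - i = l(F_j) - l(F_i)$, which has the required parity; the dual vanishing follows by Kuhn duality, and non-hook diagrams have $l = 0$ and sit in semisimple blocks, where the axioms are vacuous. Part (4) I would prove by descending induction on $i$, starting from the base case $W_{n-1} \simeq F_{n-1}$ (which reduces to the Kuhn dual of (2)) and using the short exact sequence $0 \to F_{i+1} \to W_i \to F_i \to 0$ from (1), together with $\operatorname{Ext}^q(W_i, F_j) \simeq \operatorname{Ext}^q(F_j, S_i)$ (Kuhn duality plus (2)), to extract a long exact sequence for $\operatorname{Hom}(-, F_j)$. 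The parity constraint from (3) forces $\operatorname{Ext}^*(F_i, F_j)$ and $\operatorname{Ext}^*(F_{i+1}, F_j)$ to live in degrees of opposite parity, so each connecting map in the LES is either zero or an isomorphism, which pins down the dimensions.

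The principal obstacle is part (4): one must carefully handle the boundary indices where the predicted range $0 \leq r \leq n - \max\{i,j\} - 1$ becomes empty, and exploit the parity information from (3) to verify that the alternating pattern propagates correctly through every inductive step, so that no connecting homomorphism cancels an expected generator.
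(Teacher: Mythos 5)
Your treatment of parts (1), (2), (3) and (5) is essentially the paper's own argument: exactness of $j^*$ kills the simples outside $\Lambda(p,n)$ for the decomposition matrix, and the adjunction $j^*\dashv Rj_*$ together with $Rj_*(S_\mu)\simeq S_\mu$ from Theorem \ref{hwc_struct_bounded_thm}(3) identifies the relevant Ext-groups with their stable counterparts in $\mathcal{P}_p$, which are then quoted from \cite{pj}; the Kazhdan--Lusztig property is read off from the parity of these groups exactly as you say. Where you genuinely diverge is part (4). The paper obtains it in one line as a formal consequence of the existence of a Kazhdan--Lusztig theory, i.e.\ by invoking the Cline--Parshall--Scott machinery, which expresses $\dim\operatorname{Ext}^q(F_i,F_j)$ as a convolution of the groups $\operatorname{Ext}^a(F_i,S_m)$ and $\operatorname{Ext}^b(W_m,F_j)$ already computed in (2); summing over $m\geq\max\{i,j\}$ with $q=2m-i-j$ gives the stated answer immediately. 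Your alternative is a descending induction on $i$ using the short exact sequence $0\to F_{i+1}\to W_i\to F_i\to 0$ from (1), the base case $W_{n-1}\simeq F_{n-1}$, and the known $\operatorname{Ext}^*(W_i,F_j)\simeq\operatorname{Ext}^*(F_j,S_i)$. This works: parity forces the maps $\operatorname{Ext}^q(W_i,F_j)\to\operatorname{Ext}^q(F_{i+1},F_j)$ in the long exact sequence to vanish, so it breaks into short exact sequences
\[
0\to \operatorname{Ext}^{q}(F_{i+1},F_j)\to \operatorname{Ext}^{q+1}(F_i,F_j)\to \operatorname{Ext}^{q+1}(W_i,F_j)\to 0,
\]
and the dimensions simply add, reproducing the shift $r\mapsto r+1$ in the range $0\leq r\leq n-\max\{i,j\}-1$ (your phrasing about connecting maps being ``zero or an isomorphism'' is the slightly less direct way to say this, but it is true a posteriori since in the degrees where $\operatorname{Ext}^{q+1}(W_i,F_j)\neq 0$ the group $\operatorname{Ext}^{q}(F_{i+1},F_j)$ vanishes). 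The trade-off: the paper's route is shorter but leans on the abstract theorem of \cite{cps_kl}, while yours is self-contained, needs only the explicit data of (1) and (2), and makes the boundary behaviour at $i=n-1$ transparent; the price is the bookkeeping you already anticipate at the edges of the range of $r$.
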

\begin{proof}
(1) follows from the fact that $j^*$ is exact and preserves $F_i$ and $S_i$ for
$0\leq i\leq n-1$.  The computation in (2) follows from \cite[Prop. 2.1]{pj}, since the groups under consideration are the same in ${\cal P}_{p,n}$ and ${\cal P}_{p}$ because of the equality $j_*(S_j)=S_j$. (3) follows from 
(2) (although it is also a formal consequence of the existence of the recollement setup). The computation of the Ext-groups in (4) is in turn a formal consequence of the existence of a Kazhdan-Lusztig theory in ${\cal P}_{p,n}$. The computation in (5) follows from  \cite[Prop. 2.6]{pj}
and the fact that $j_*(S_i)=S_i$. \end{proof}
Now we investigate multiplicative structures and formality phenomena in 
${\cal P}_{p,n}$. Analogous questions in ${\cal P}_p$ were studied by the second author in \cite{pj}. We will again use the close relation between
${\cal P}_p$ and ${\cal P}_{p,n}$. A general picture is quite similar in  both cases, but the crucial constructions of \cite{pj} need to be aptly adopted 
to ${\cal P}_{p,n}$. Namely, 
the second author constructed in \cite[Section 3.2]{pj}   an explicit injective resolution 
$\mathcal{R}_i$ of $F_i$
in ${\cal P}_p$ consisting of the parts of the de Rham and Koszul complexes spliced together. Unfortunately, $j^* (\mathcal{R}_i)$ will not be suitable for us, since it is not injective anymore. Nevertheless, we shall obtain an injective resolution $\mathcal{R}_{i,n}$ of $F_i$
in ${\cal P}_{p,n}$  by a similar construction.
Namely, set \[\mathcal{R}^{r,s}_{i,n} = \Omega^{i+s-r} \qquad if  \ 0\leq r \leq n-1, \ 0\leq s \leq n-i-1 \ and \ r-s \leq i ,\] 
where $\Omega^i:=S^{p-i}\otimes\Lambda^i$. 
The horizontal and vertical differentials are, respectively, the de Rham and Koszul differentials. Since $\kappa d+ d \kappa=0$, $\mathcal{R}_{i,n}^{**}$
is a double complex.
We observe that $\mathcal{R}_{i,n}$ is the truncation of the double complex providing $\mathcal{R}_i$ at the vertical degree $n-i-1$. For instance, $\mathcal{R}_{3,5}$ for $p=7$ is of the following form:
$$\begin{tikzcd}
\Omega^4 \arrow[r,"\kappa_4"] & \Omega^3 \arrow[r,"\kappa_3"] & \Omega^2 \arrow[r,"\kappa_2"] & \Omega^1 \arrow[r,"\kappa_1"] & \Omega^0\\
\Omega^3 \arrow[r,"\kappa_3"] \arrow[u,"d_3"] & \Omega^2 \arrow[r,"\kappa_2"] \arrow[u,"d_2"] & \Omega^1 \arrow[r,"\kappa_1"] \arrow[u,"d_1"] & \Omega^0 \arrow[u,"d_0"]
\end{tikzcd}.$$
Since $\Omega^k$ is injective for $0\leq k \leq n-1$ (as being a summand in the injective cogenerator  $S^{p,n}$)  and \( H^*(\operatorname{Tot}(\mathcal{R}_{i,n}))=H^0(\operatorname{Tot}(\mathcal{R}_{i,n}))=F_i$ (c.f \cite[Section 3.2]{pj}), $\operatorname{Tot}(\mathcal{R}_{i,n})$ is an injective resolution of $F_i$. We use for the simplicity the same symbol $\mathcal{R}_{i,n}$ for that. Set $\mathcal{R}=\bigoplus_{0\leq i \leq n-1} \mathcal{R}_{i,n}$ and $F=\bigoplus_{0\leq i \leq n-1} F_i$. 
We now observe that by truncation we get a quasi-isomorphism 
$j^*(\mathcal{R}_i)\simeq \mathcal{R}_{i,n}$, which, in particular, shows that
$j^*$ induces an epimorphism $\operatorname{Ext}^*_{\mathcal{P}_p}(F,F)\to\operatorname{Ext}^*_{\mathcal{P}_{p,n}}(F,F).$ Also, since the construction of $\mathcal{R}_{i,n}$ is analogous to that of $\mathcal{R}_i$, the arguments of \cite[Section 3.2]{pj} apply to the current situation and we obtain:

\begin{theorem}[]~
\label{yon_simple_thm}
\begin{enumerate}
\item Let $B=\bigoplus_{t\in \mathbb{N}} B_t$ be the graded algebra with grading
\begin{align*}
B_t= \operatorname{span}\{b^t_{ji}:0\leq i,j\leq n-1 \text{ and } t=|i-j|+2r, \text{ where } 0\leq r \leq n-\max\{i,j\}-1\} 
\end{align*}
for $0\leq t\leq 2n-2$ and $B_t=0$ for $t\geq 2n-1$, where $b^t_{ji}$ are the formal symbols for $i,j,t$ satisfying the above conditions. The multiplication on $B$ is given by $$b^t_{lm}\cdot b^u_{ji} = \begin{cases} b^{t+u}_{mi} \qquad & if \ j=l \ and \ u+t \leq 2n-i-m-2\\ 0 \qquad & otherwise \end{cases}.$$
Then there is a graded algebra isomorphism $
\operatorname{Ext}^*_{{\cal P}_{p,n}}(F,F)\simeq B.$
\item There is a graded algebra isomorphism $\operatorname{Ext}_{{\cal P}_{p,n}}^*(F_i,F_i)\simeq {\bf k}[x]/(x^{n-i})$ for $0\leq i \leq n-1$  and $x$ of degree $2$. In particular, $\operatorname{Ext}_{{\cal P}_{p,n}}^*(F_i,F_i)$ is a commutative algebra.
\item The algebra $\operatorname{End}^*(\mathcal{R})$ is a formal DG algebra, i.e. there exists a quasi-isomorphism of DG algebras $\eta:\operatorname{Ext}^*_{{\cal P}_{p,n}}(F,F)\to \operatorname{End}^*(\mathcal{R}),$ where $\operatorname{Ext}^*_{{\cal P}_{p,n}}(F,F)$ is regarded as the DG algebra with zero differential.
\item There is an equivalence of triangulated categories $\mathcal{D}^b\mathcal{P}_{p,n}^{\varnothing} \simeq \mathcal{D}^b(\operatorname{Ext}^*_{{\cal P}_{p,n}}(F,F)-\text{mod}^{\text{gr}})$.
\end{enumerate}
\end{theorem}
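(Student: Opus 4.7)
The plan is to follow \cite[Section 3.2]{pj} step by step, the only new ingredient being the truncation of the double complex at vertical degree $n-i-1$, which is precisely what produces the cut-off conditions in the product formula of (1).

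For (1), I would first identify $\operatorname{Ext}^*_{\mathcal{P}_{p,n}}(F,F)$ with the cohomology of $\operatorname{Hom}^*_{\mathcal{P}_{p,n}}(F,\mathcal{R})$, reading off an explicit cocycle representative for each generator $b^t_{ji}$ from a specific component $\Omega^\bullet$ of $\mathcal{R}^{**}_{i,n}$ of appropriate bidegree; the count of such non-zero components inside the truncated rectangle matches the formula for the basis in the statement and agrees with Theorem \ref{homological_results}(4). The Yoneda product $b^t_{lm}\cdot b^u_{ji}$ is then realised by lifting one class to a chain map of total degree $u$ between the two resolutions and composing with the cocycle representing the other. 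On the untruncated resolutions of \cite{pj} this composition yields exactly $b^{t+u}_{mi}$ when the middle indices match; the new feature here is that once the total degree exceeds $2n-i-m-2$, the natural composite lands in bidegrees lying outside the truncated rectangle, where the relevant components of $\mathcal{R}^{**}$ are zero, and so vanishes. Part (2) is the specialisation to $i=j$: the basis collapses to $\{b^{2r}_{ii}\}_{0\le r\le n-i-1}$ and the product reduces to $x^r\cdot x^s=x^{r+s}$ truncated at $x^{n-i}$, giving ${\bf k}[x]/(x^{n-i})$.

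For (3), the task is to upgrade these chain-level cocycles to a strictly multiplicative section $\eta$ of the projection $\operatorname{End}^*(\mathcal{R})\to\operatorname{Ext}^*_{\mathcal{P}_{p,n}}(F,F)$. I would take $\eta(b^t_{ji})$ to be the endomorphism of $\mathcal{R}$ given by the natural identity-type map between the $\Omega^\bullet$ pieces sitting at matching superscripts in $\mathcal{R}^{**}_{i,n}$ and $\mathcal{R}^{**}_{j,n}$, shifted appropriately in bidegree and extended by zero outside the truncated rectangle. Since the composition of two such maps is again of the same form (when the total shift stays inside the rectangle) or identically zero (otherwise), $\eta$ is multiplicative on the nose, not merely up to homotopy; because the source has zero differential and $\eta$ induces the tautological isomorphism on cohomology, this gives the required quasi-isomorphism of DG algebras. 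For (4), I would use that $\mathcal{R}$ is a compact generator of $\mathcal{D}^b\mathcal{P}_{p,n}^\varnothing$ (being an injective resolution of a sum of representatives of all simples in the block), so $\operatorname{RHom}(\mathcal{R},-)$ induces a triangulated equivalence with the derived category of DG modules over $\operatorname{End}^*(\mathcal{R})$. By the formality of (3) this descends to $\mathcal{D}^b(B\text{-mod}^{\text{gr}})$, boundedness being ensured by the finite global dimension of $\mathcal{P}_{p,n}^\varnothing$ as a highest weight category with finite poset.

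The main obstacle I expect is in (3): one must check that the chain-level translations really produce the correct Yoneda composites on cohomology and that they are compatible with both the de Rham and Koszul differentials across the boundary of the truncated rectangle. This compatibility is automatic in the untruncated setting of \cite{pj}, but here it requires a careful inspection at the vertical cut-off $s=n-i-1$ to confirm that zeroing out components outside the rectangle does not introduce coboundary corrections that would destroy strict multiplicativity.
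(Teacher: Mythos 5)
Your proposal is correct and follows essentially the same route as the paper, which itself only says that the arguments of \cite[Section 3.2]{pj} carry over to the truncated resolutions $\mathcal{R}_{i,n}$: explicit translation-type cocycles in $\operatorname{End}^*(\mathcal{R})$ representing the classes $b^t_{ji}$, closed under composition with the overshooting composites killed by the truncation (which is exactly what yields the condition $t+u\leq 2n-i-m-2$), giving formality and then the tilting-type equivalence in (4). The boundary check at the vertical cut-off that you flag is indeed the only genuinely new verification compared with \cite{pj}, and your plan handles it in the intended way.
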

We remark  that similar results concerning the Yoneda algebra of Schur functors, analogous to \cite[Thm. 3.2, Cor. 3.3]{pj}, also hold, though we leave their formulation to the interested reader.

The computations in ${\cal P}_{p,n}$ obtained so far could be hardly  called surprising. However, when we try to compute the groups 
${\rm Ext}^*_{{\cal P}_{p,n}}(F_i, W_j)$ and ${\rm Ext}^*_{{\cal P}_{p,n}}(S_i, W_j)$, we encounter some interesting phenomena. Again, analogously 
to \cite{pj} we will use the minimal injective resolution of $W_i$, let us call it 
$L_{i,n}$ (in fact \cite[(11)]{pj} gives the minimal projective resolution of $S_i$, hence we consider the Kuhn dual case). Again
we  cannot  just apply $j^*$ to the dual of \cite[(11)]{pj}, since the resulting 
complex is not injective.
Instead, we need to adapt the idea of its construction to our context. Namely, this time we concatenate 
the truncated dual Koszul and  Koszul complexes, but we cut them off  earlier 
than it was done in \cite[(11)]{pj} in order to get an injective complex. Thus, we put as $L_{i,n}$ the following:
\[
(\Omega^{i+1})^{\#}\longrightarrow 
(\Omega^{i+2})^{\#}\longrightarrow\ldots\longrightarrow (\Omega^{n-1})^{\#}
\stackrel{\alpha}{\longrightarrow}\Omega^{n-1}
\longrightarrow\Omega^{n-2}\longrightarrow\ldots\longrightarrow\Omega^{0},
\]
where $\alpha=(\Omega^{n-1})^{\#}\stackrel{k^{\#}}{\longrightarrow}
(\Omega^{n})^{\#}\simeq\Omega^{n}
\stackrel{k}{\longrightarrow} \Omega^{n-1}$.
We remark that $L_{i,n}$ is the minimal injective resolution of $W_i$ in
${\cal P}_{p,n}$, since
$\Omega^q$ is the injectve envelope of $F_q$ for $0\leq q\leq n-1$   by \cite[Proposition 1.1]{pj} and \cite[Lemma 1.2]{cps_duality}.

Then by repeating the arguments from the proofs of \cite[Prop. 2.8, Cor. 2.9, Prop. 2.10]{pj} we obtain:
\begin{prop}
We have the following computations in ${\cal P}_{p,n}$:
\begin{enumerate}
\item For $0\leq i,j \leq n-1$ \[\operatorname{Ext}^q(S_i,F_j)=\begin{cases} {\bf k} \qquad & if \ (i<j \land q=j-i-1) \lor q=2n-i-j-2 \\ 0 \qquad & otherwise \end{cases} \] and \[\operatorname{Ext}^q(F_i,W_j)=\begin{cases} {\bf k} \qquad & if \ (i>j \land q=i-j-1) \lor q=2n-i-j-2 \\ 0 \qquad & otherwise \end{cases}.\]
\item For $0\leq i,j \leq n-1$ \[\operatorname{Ext}^q(S_i,W_j)=\begin{cases} {\bf k}  & if \ (i=j \land q=0)\lor q=2n-i-j-3 \lor q=2n-i-j-2  \\ 0  & otherwise \end{cases}.\]
\end{enumerate}
\end{prop}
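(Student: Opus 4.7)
The plan, following the strategy of \cite[Prop. 2.8, Cor. 2.9, Prop. 2.10]{pj}, is to exploit the minimal injective resolution $L_{j,n}$ of $W_j$ just constructed. The key preliminary observation is that $(\Omega^q)^\# \simeq \Omega^q$ for $1 \leq q \leq n-1$ (a consequence of the natural isomorphism $S^s \simeq \Gamma^s$ in degrees $s < p$), so every term of $L_{j,n}$ is of the form $\Omega^m$ with $0 \leq m \leq n-1$. Explicitly, $L_{j,n}^q = \Omega^{j+1+q}$ for $0 \leq q \leq n-2-j$ (the left half) and $L_{j,n}^q = \Omega^{2n-2-j-q}$ for $n-1-j \leq q \leq 2n-2-j$ (the right half).

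For the first two formulas in part (1), I would apply $\operatorname{Hom}(F_i, -)$ to $L_{j,n}$. Since this is a minimal injective resolution, the standard socle argument forces all induced differentials on $\operatorname{Hom}(F_i, L_{j,n}^\bullet)$ to vanish, hence $\operatorname{Ext}^q(F_i, W_j) = \operatorname{Hom}(F_i, L_{j,n}^q)$. As $\operatorname{soc}(\Omega^m) = F_m$, this equals $\mathbf{k}\cdot\delta_{i,m}$, and the non-zero contributions occur exactly at $q = i-j-1$ (present only when $i > j$, placing the bidegree in the left half) and at $q = 2n-2-i-j$ (always present, from the right half). The companion formula for $\operatorname{Ext}^*(S_i, F_j)$ then follows by Kuhn duality: since $(-)^\#$ is an exact contravariant equivalence with $S_i^\# = W_i$ and $F_j^\# = F_j$, we have $\operatorname{Ext}^q(S_i, F_j) \simeq \operatorname{Ext}^q(F_j, W_i)$, from which the stated formula is read off by relabelling.

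For part (2) I would apply $\operatorname{Hom}(-, W_j)$ to the short exact sequence $0 \to F_i \to S_i \to F_{i+1} \to 0$ (with the convention $F_n := 0$, which absorbs the edge case $i = n-1$ where $S_{n-1} = F_{n-1}$ and the claim reduces to part (1)). The resulting long exact sequence
\[
\cdots \to \operatorname{Ext}^q(F_{i+1}, W_j) \to \operatorname{Ext}^q(S_i, W_j) \to \operatorname{Ext}^q(F_i, W_j) \xrightarrow{\delta_q} \operatorname{Ext}^{q+1}(F_{i+1}, W_j) \to \cdots,
\]
combined with the formulas of part (1), determines $\operatorname{Ext}^*(S_i, W_j)$ up to the behaviour of each $\delta_q$. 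In the diagonal case $i=j$, inspecting the formulas of part (1) shows that the groups $\operatorname{Ext}^q(F_i, W_i)$ and $\operatorname{Ext}^{q+1}(F_{i+1}, W_i)$ are never simultaneously non-zero, so every $\delta_q$ vanishes trivially, and the three predicted contributions at $q = 0,\ 2n-2i-3,\ 2n-2i-2$ appear automatically.

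The principal obstacle lies in the off-diagonal cases $i \neq j$, where the formulas of part (1) often force both $\operatorname{Ext}^q(F_i, W_j)$ and $\operatorname{Ext}^{q+1}(F_{i+1}, W_j)$ to be one-dimensional, leaving $\delta_q : \mathbf{k} \to \mathbf{k}$ to be determined as either zero or an isomorphism. I would identify $\delta_q$ with Yoneda multiplication by the class $\xi \in \operatorname{Ext}^1(F_{i+1}, F_i)$ of the sequence $0 \to F_i \to S_i \to F_{i+1} \to 0$, thereby reducing the question to an explicit product computation in the right $B$-module structure on $\operatorname{Ext}^*(F, W_j)$, where $B$ is the Yoneda algebra of Theorem \ref{yon_simple_thm}(1). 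The key point is that the degree bound $u + t \leq 2n - i - m - 2$ in the multiplication rule of $B$ is precisely what accounts for the extra term at $q = 2n-i-j-3$ in the answer: it is the largest cohomological degree in which the relevant product survives. A finite case analysis on the positions of $i, j, q$ relative to $n$ then delivers the formula, mirroring the proof of \cite[Prop. 2.10]{pj}.
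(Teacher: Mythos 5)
Part (1) of your proposal is correct and is in substance the paper's own route: the paper likewise reads $\operatorname{Ext}^*(F_i,W_j)$ off the minimal injective resolution $L_{j,n}$ (whose terms are the $\Omega^m$, $0\le m\le n-1$, with simple socles $F_m$, using $(\Omega^m)^\#\simeq\Omega^m$ for $m>0$) and gets the companion formula by Kuhn duality; this is exactly the adaptation of \cite[Prop.~2.8]{pj} that the paper invokes.

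The gap is in part (2), at the very point you flag but do not resolve. For $i\le j$ there is indeed no ambiguity (not only for $i=j$: when $i<j$ the source of every connecting map vanishes in the relevant degrees), so the only delicate case is $j<i<n-1$. There the long exact sequence leaves a single undetermined map $\delta_{i-j-1}\colon\operatorname{Ext}^{i-j-1}(F_i,W_j)\to\operatorname{Ext}^{i-j}(F_{i+1},W_j)$ between one-dimensional spaces, and the asserted formula for $\operatorname{Ext}^*(S_i,W_j)$ is \emph{equivalent} to $\delta_{i-j-1}\neq 0$: were it zero, extra copies of $\mathbf{k}$ would survive in degrees $i-j-1$ and $i-j$. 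Identifying $\delta_{i-j-1}$ with Yoneda composition with $\xi\in\operatorname{Ext}^1(F_{i+1},F_i)$ is fine, but your proposed reduction to ``the right $B$-module structure on $\operatorname{Ext}^*(F,W_j)$'' does not close the argument: Theorem 2.2 describes only the algebra $B=\operatorname{Ext}^*(F,F)$, not the $B$-module $\operatorname{Ext}^*(F,W_j)$, and determining that module structure in the relevant bidegree is precisely the nonvanishing in question, which you never establish. Your stated ``key point'' is also aimed at the wrong term: the class in degree $2n-i-j-3$ requires no product argument at all, since it is carried isomorphically from $\operatorname{Ext}^{2n-i-j-3}(F_{i+1},W_j)$ (the neighbouring terms of the sequence vanish for range and parity reasons), whereas what the product computation must deliver is the cancellation in degrees $i-j-1$ and $i-j$. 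Note further that no Euler-characteristic or $K_0$ bookkeeping can decide this, because the two ambiguous classes sit in adjacent degrees and cancel; a genuinely new input is needed, e.g.\ a cochain-level check on $L_{j,n}$ (or on the resolutions $\mathcal{R}_{i,n}$) that composition with $\xi$ acts nontrivially on $\operatorname{Ext}^{i-j-1}(F_i,W_j)$. This is exactly what the paper imports by repeating the explicit argument of \cite[Prop.~2.10]{pj} in $\mathcal{P}_{p,n}$, and it is the step missing from your proposal.
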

We point out here that although our construction is similar to that 
in ${\cal P}_p$, we cannot say that the result is just the truncation of the corresponding computation in ${\cal P}_p$, since the Ext-groups are shifted. In particular, in this case $j^*$ is not epimorphic on the Ext-groups. Some even more striking examples of non-obvious behavior of $j^*$ we will provide in the next paragraph.

It was observed in \cite[p. 183-184]{ab}) that $I^{\otimes d}$  is not projective if $d>n$. The conceptual reason for this is that it is not a direct summand in a projective generator of ${\cal P}_{d,n}$ anymore. Hence, let us compute
some Ext-groups in ${\cal P}_{p,n}$ involving
$I^{\otimes p}$.
%${\rm Ext}^*_{{\cal P}_{p,p}}(I^{\otimes p},F)$
To this end, we shall use the adjunction $\{j^*,Rj_*\}$ between the categories
${\cal DP}_d$ and ${\cal DP}_{d,n}$. Let us start with an easy general observation:
\begin{prop}
For any $F\in{\cal P}_{d,n}$ and projective $P\in{\cal P}_d$ we have:
\[ {\rm Ext}^{q}_{{\cal P}_{d,n}}(j^*(P),F)\simeq 
H^q({\rm Hom}_{{\cal P}_{d}}(P,Rj_*(F))).
\]
\end{prop}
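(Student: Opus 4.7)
The plan is to derive this as a direct consequence of the adjunction $(j^*, Rj_*)$ established in Theorem \ref{hwc_struct_bounded_thm}. Since $j^*$ is already exact at the abelian level, no derived functor is needed on it, and the right adjoint $j_*$ has a derived version $Rj_*$ computable via injective resolutions because $j_*$ preserves injectives (being right adjoint to the exact functor $j^*$, or equivalently by part (3) of Theorem \ref{hwc_struct_bounded_thm}).

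First, I would choose an injective resolution $F\to I^{\bullet}$ in $\mathcal{P}_{d,n}$ and take $j_*(I^{\bullet})$ as the concrete representative of $Rj_*(F)\in \mathcal{D}\mathcal{P}_d$. Applying the term-by-term abelian adjunction gives an isomorphism of complexes
\begin{equation*}
\operatorname{Hom}_{\mathcal{P}_d}(P, j_*(I^{\bullet})) \;\simeq\; \operatorname{Hom}_{\mathcal{P}_{d,n}}(j^*(P), I^{\bullet}),
\end{equation*}
natural in each entry, hence compatible with differentials.

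Second, I would take $H^q$ of both sides. The left-hand side is by definition $H^q(\operatorname{Hom}_{\mathcal{P}_d}(P, Rj_*(F)))$; this is well-defined because $P$ projective makes $\operatorname{Hom}_{\mathcal{P}_d}(P,-)$ exact, so cohomology commutes with it and the answer does not depend on the choice of injective representative. The right-hand side is $H^q(\operatorname{Hom}_{\mathcal{P}_{d,n}}(j^*(P), I^{\bullet}))$, which, since $I^{\bullet}$ is an injective resolution of $F$ in $\mathcal{P}_{d,n}$, computes $\operatorname{Ext}^q_{\mathcal{P}_{d,n}}(j^*(P), F)$. Comparing the two yields the desired formula.

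I do not foresee a real obstacle here; the only subtlety worth spelling out is the role of projectivity of $P$, which guarantees that $\operatorname{Hom}_{\mathcal{P}_d}(P,-)$ sends a quasi-isomorphism of bounded below complexes of injectives to a quasi-isomorphism, so the notation $\operatorname{Hom}_{\mathcal{P}_d}(P, Rj_*(F))$ in the derived category is unambiguous and agrees with the complex $\operatorname{Hom}_{\mathcal{P}_d}(P, j_*(I^{\bullet}))$ used in the computation.
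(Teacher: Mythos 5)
Your proposal is correct and follows essentially the same route as the paper: the paper's proof also just invokes the adjunction between $j^*$ and $Rj_*$ together with the exactness of $\operatorname{Hom}_{\mathcal{P}_d}(P,-)$ for projective $P$, which lets cohomology commute with that Hom. Your version merely makes the derived adjunction explicit by computing $Rj_*(F)$ as $j_*(I^{\bullet})$ for an injective resolution $I^{\bullet}$ of $F$ (legitimate since $j_*$ preserves injectives), which is a fair unwinding of the same argument.
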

\begin{proof} It follows from the afore-mentioned adjunction and the fact 
that the functor ${\rm Hom}_{{\cal P}_{d}}(P,-)$ is exact, hence it commutes with cohomology. \end{proof}
Thus, we see that the task of computing ${\rm Ext}^*_{{\cal P}_{p,n}}(I^{\otimes p},F)$ is essentially reduced to that of computing $H^*(Rj_*(F))$.
For this we shall use $j_*$-acyclic resolutions in ${\cal P}_{p,n}$, which are simpler than injective ones. One of these is $K_{i,n}$, the truncated complex of ``Koszul kernels''
in ${\cal P}_{p,n}$ given by:
\[
S_i\stackrel{d}{\longrightarrow}S_{i+1}\stackrel{d}{\longrightarrow}\ldots
\stackrel{d}{\longrightarrow}S_{n-1},
\]
which  is a  resolution of $F_i$ 
(cf. \cite[(9)]{pj}). The second is the truncated de Rham complex $M_{i,n}$:
\[
\Omega^{i+1}\stackrel{d}{\longrightarrow}\Omega^{i+2}\stackrel{d}{\longrightarrow}\ldots
\stackrel{d}{\longrightarrow}\Omega^n,
\]
which is a resolution of $W_i$. In fact $M_{i,n}$  is the first part of the complex $L_{i,n}$, because $(\Omega^j)^\#\simeq \Omega^j$ for $j>0$. It is not injective, since $\Omega^n$ is not injective, but it is, as we will see,
$j_*$-acyclic. We shall also need the  complex $M_{i,n}'$  in ${\cal P}_p$
 given by:
 \[
\Omega^{i+1}\stackrel{d}{\longrightarrow}\Omega^{i+2}\stackrel{d}{\longrightarrow}\ldots
\stackrel{d}{\longrightarrow}\Omega^{n-1}\stackrel{\beta}{\longrightarrow}
S_{n-1},
\]
where the map $\beta$ is the composition of the canonical projection and the de Rham differential:
\[\Omega^{n-1}\longrightarrow 
S_{n-2}\stackrel{d}{\longrightarrow} S_{n-1}.\]
In the next theorem we still use the convention introduced in Theorem 1.4 that for ``generally known functors'' we denote their restriction $j^*$ by the same symbol. Thus, formula like $j_*(F)\simeq F$ actually means that for 
some $F\in{\cal P}_p$ we have $j^*(j_*(F))\simeq F$.  

Now we have:
\begin{theorem}
For any $0\leq i\leq n-1$ we have:
\begin{enumerate}
\item $Rj_*(F_i)=j_*(K_{i,n})\simeq K_{i,n}$. Hence $j_*(F_{n-1})\simeq Rj_*(F_{n-1})\simeq S_{n-1}$ and for 
$0\leq i<n-1$, we have:
\[
H^q(Rj_*(F_i))\simeq
\begin{cases}
F_i & {\rm for\ } q=0\\
F_{n} & {\rm for\ } q=n-i-1\\
0 & {\rm otherwise.}
\end{cases}
\]
\item $Rj_*(W_i)=j_*(M_{i,n})\simeq M_{i,n}'$. Hence $j_*(W_{n-1})\simeq Rj_*(W_{n-1})\simeq S_{n-1}$ and for 
$0\leq i<n-1$, we have:
\[
H^q(Rj_*(W_i))=
\begin{cases}
W_i & {\rm for\ } q=0\\
F_{n-1} & {\rm for\ } q=n-i-2,\ q=n-i-1\\
0 & {\rm otherwise.}
\end{cases}
\]
\end{enumerate}
\end{theorem}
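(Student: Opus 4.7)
The plan is to compute $Rj_*$ of $F_i$ and $W_i$ by choosing $j_*$-acyclic resolutions in $\mathcal{P}_{p,n}$, applying $j_*$ termwise, and reading off the resulting complex inside $\mathcal{P}_p$.

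For (1), the resolution $K_{i,n}\colon S_i\to\cdots\to S_{n-1}$ of $F_i$ is tailor-made: by Theorem~1.4(3), each $S_k$ satisfies $Rj_*(S_k)\simeq j_*(S_k)\simeq S_k$, so every term is $j_*$-acyclic. A standard acyclic-resolution argument then yields $Rj_*(F_i)\simeq j_*(K_{i,n})$ in $\mathcal{D}\mathcal{P}_p$, which termwise is literally the complex $K_{i,n}$ with the same differentials, now viewed in $\mathcal{P}_p$. To compute its cohomology I would invoke the un-truncated Koszul-kernels resolution $0\to F_i\to S_i\to\cdots\to S_{p-1}\to 0$ in $\mathcal{P}_p$, which is exact with intermediate kernels $F_k$. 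Truncation at $S_{n-1}$ then yields $F_i$ in degree $0$, trivial cohomology in between, and at the terminal slot the cokernel $S_{n-1}/F_{n-1}\simeq F_n$, coming from $0\to F_{n-1}\to S_{n-1}\to F_n\to 0$ in $\mathcal{P}_p$. The case $i=n-1$ degenerates to the single term $K_{n-1,n}=S_{n-1}=j_*(F_{n-1})$.

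For (2), I would run the same strategy with $M_{i,n}\colon\Omega^{i+1}\to\cdots\to\Omega^n$ in place of $K_{i,n}$. The interior terms $\Omega^k$ with $k\le n-1$ are injective in $\mathcal{P}_{p,n}$, hence $j_*$-acyclic; by Theorem~1.4(3), $j_*$ sends each $\Omega^k$ to the injective envelope of $j_*(F_k)$ in $\mathcal{P}_p$, which is again $\Omega^k$ (the socle of $j_*(F_k)$ being $F_k$ in all cases: $j_*(F_k)=F_k$ for $k\le n-2$, and $j_*(F_{n-1})=S_{n-1}$ by Part~(1)). The main obstacle is the terminal term $\Omega^n$, which is not injective. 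The key observation is that $\Omega^n\simeq F_{n-1}$ in $\mathcal{P}_{p,n}$: in $\mathcal{P}_p$ the injective envelope $\Omega^n$ of $F_n$ fits in a short exact sequence $0\to F_n\to\Omega^n\to F_{n-1}\to 0$, and $j^*$ kills the submodule $F_n$, leaving only $F_{n-1}$. Consequently $\Omega^n$ is $j_*$-acyclic (being isomorphic in $\mathcal{P}_{p,n}$ to $F_{n-1}$, whose acyclicity is exactly Part~(1)), and $j_*(\Omega^n)\simeq j_*(F_{n-1})\simeq S_{n-1}$.

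Assembling this, every term of $M_{i,n}$ is $j_*$-acyclic, so $Rj_*(W_i)\simeq j_*(M_{i,n})$; termwise this is $\Omega^{i+1}\to\cdots\to\Omega^{n-1}\to S_{n-1}$. The final differential $\Omega^{n-1}\to S_{n-1}$ is obtained by applying $j_*$ to the de Rham map $\Omega^{n-1}\to\Omega^n\simeq F_{n-1}$ in $\mathcal{P}_{p,n}$; tracing through the identifications recognises it as the composition $\beta\colon\Omega^{n-1}\to S_{n-2}\xrightarrow{d}S_{n-1}$, so $j_*(M_{i,n})=M_{i,n}'$ on the nose. The cohomology of $M_{i,n}'$ in $\mathcal{P}_p$ is then extracted by a direct diagram chase in the spirit of \cite[Prop.~2.8, 2.10]{pj}: the de Rham piece, together with the short exact sequence $0\to F_{n-1}\to S_{n-1}\to F_n\to 0$ and the explicit form of $\beta$, give $W_i$ in degree $0$ only and two copies of $F_{n-1}$ in degrees $n-i-2$ and $n-i-1$. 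The main technical obstacle is the structural identification $\Omega^n\simeq F_{n-1}$ in $\mathcal{P}_{p,n}$ (which then forces $j_*(\Omega^n)=S_{n-1}$); once this is in hand, Parts (1) and (2) proceed in parallel.
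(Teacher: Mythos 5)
Your overall strategy is the paper's own: compute $Rj_*$ by applying $j_*$ termwise to the $j_*$-acyclic resolutions $K_{i,n}$ and $M_{i,n}$ and then identify the resulting complexes inside $\mathcal{P}_p$. Two points, however, are genuinely problematic. First, the structural claim you use for the terminal term of $M_{i,n}$ is false: in $\mathcal{P}_p$ the injective $\Omega^n=S^{p-n}\otimes\Lambda^n$ does not sit in $0\to F_n\to\Omega^n\to F_{n-1}\to 0$; it has a good filtration $0\to S_n\to\Omega^n\to S_{n-1}\to 0$, hence composition factors $F_{n-1},F_n,F_n,F_{n+1}$ (three factors when $n=p-1$). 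Your conclusion survives, since $j^*(S_n)=0$ and therefore $j^*(\Omega^n)\simeq S_{n-1}\simeq F_{n-1}$ in $\mathcal{P}_{p,n}$, which is exactly what the paper uses, but the step as you wrote it is wrong. Relatedly, in both parts you identify the differentials of $j_*(K_{i,n})$ and of $j_*(M_{i,n})$ with $d$, respectively $\beta$, by fiat (``the same differentials'', ``tracing through the identifications''); the paper justifies this via the faithfulness of $j_*$ combined with the one-dimensionality of the relevant Hom-spaces (e.g. $\operatorname{Hom}_{\mathcal{P}_p}(S_q,S_{q+1})$), and some such argument is needed, since the isomorphisms $j_*(S_q)\simeq S_q$, $j_*(\Omega^q)\simeq\Omega^q$ are not a priori compatible with the maps.

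Second, and more seriously, the cohomology you announce in part (2) is not what your own complex yields, because the chase is never actually carried out. In $M'_{i,n}$ the image of $\beta$ is the socle $F_{n-1}\subset S_{n-1}$, so the top cohomology is $\operatorname{coker}(\beta)\simeq S_{n-1}/F_{n-1}\simeq F_n$, and the cohomology at the $\Omega^{n-1}$ spot is $\ker(\beta)/W_{n-2}\simeq F_n$ as well --- not $F_{n-1}$. This agrees with the paper's own route, which gets the higher cohomology of $Rj_*(W_i)$ from part (1) and the long exact sequence of the triangle $Rj_*(F_{i+1})\to Rj_*(W_i)\to Rj_*(F_i)$ attached to $0\to F_{i+1}\to W_i\to F_i\to 0$, and it agrees with Corollary 2.6(3), where the answer is $G_n=s(F_n)$. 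So the ``$F_{n-1}$'' in the printed statement is evidently a misprint for $F_n$; by asserting that the chase ``gives two copies of $F_{n-1}$'' you have reproduced the misprint rather than proved (or detected) it. The boundary case $i=n-2$, where $n-i-2=0$, also needs care: there $H^0(Rj_*(W_{n-2}))=\ker(\beta)$, which is strictly larger than $W_{n-2}$.
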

\begin{proof} We first observe that, by Theorem 1.4(3), the complex $K_{i,n}$
is $j_*$-acyclic, hence it may be used for computing $Rj_*(F_i)$.
Since $j_*(S_q)\simeq S_q$ for $q\leq n-1$ (Theorem 1.4(3) again), we see
that the complexes $j_*(K_{i,n})$ and $K_{i,n}$ consist of isomorphic objects.
Then by the faithfulness of $j_*$, $j_*(d)\neq 0$, but since
${\rm dim}({\rm Hom}_{{\cal P}_p}(S_q,S_{q+1}))=1$ by 
\cite[Proposition 2.6]{pj}, we conclude that $j_*(d)=d$ up to a non-zero scalar. This shows that $Rj_*(K_{i,n})\simeq K_{i,n}$ in ${\cal DP}_p$. The formula for cohomology of $Rj_*(F_i)$ follows from the fact that the kernel (resp. cokernel)
of the map $d: S_q \longrightarrow S_{q+1}$ in ${\cal P}_p$ for $q< p$
is isomorphic to $F_q$ (resp. $F_{q+1}$).

For the second part, we start by invoking Theorem 1.4(3) again to show that 
$M_{i,n}$ is $j_*$-acyclic, since all its terms except $\Omega^n$ are injective while $\Omega^n\simeq S_{n-1}$ in ${\cal P}_{p,n}$. More precisely, by the fact mentioned before Proposition 2.3  that for $q<n$, 
$\Omega^q$ is the
injective envelope of $S_j$ and Theorem 1.4(3), we obtain that 
$j_*(\Omega^q)\simeq \Omega^q$ for $0\leq q<n$ and $j_*(\Omega^n)\simeq
S_{n-1}$. Then we conclude that $j_*(M_{i,n})\simeq M_{i,n}'$ by the similar argument to that used in the proof of the first part, since the Hom-spaces between the functors under inspection are one-dimensional by e.g. an elementary computation using the sum-diagonal adjunction. Then the fact 
that $H^0(M_{i,n}')\simeq W_i$ is the main point of \cite[(11)]{pj}. The rest of the formula for $H^*(Rj_*(W_i))$ could be derived  from the description of the kernel/cokernel of $\beta$, but it also follows immediately from the first part of the Theorem and the long exact sequence of cohomology applied to 
the distinguished triangle
\[ 
Rj_*(F_{i+1})\ra Rj_*(W_i)\ra Rj_*(F_i)
\]
for $i<n-1$ coming from the short exact sequence in ${\cal P}_{p,n}$:
\[
0\ra F_{i+1}\ra W_i\ra F_i\ra 0,
\]
 which is a consequence of Theorem 2.1(1).
\end{proof}
Now we can apply our computation and Proposition 2.4 to $P=I^{\otimes p}$
and compute the corresponding Ext-groups. In order to  describe an extra structure  carried by these Exts, which comes from the  action of the symmetric group $\Sigma_p$ on $I^{\otimes p}$,  we shall formulate our results in terms of ``the derived Schur functor'' $Rs$. Namely, classically (see e.g. \cite[Section 4]{martin}) one considers ``the Schur functor'' $s: {\cal P}_d\longrightarrow {\bf k}[\Sigma]_d$-mod given by
the formula:
\[
s(F):={\rm Hom}_{{\cal P}_d}(I^{\otimes d},F).
\]
In fact, over a field of characteristic zero $s$ establishes an equivalence
between ${\cal P}_d$ and the category $\ka[\Sigma_d]{\rm-mod}$ of finite dimensional $\ka$-representations of $\Sigma_d$. Over a field of positive characteristic $s$ is not an equivalence anymore, but it still preserves an important information.
For example, for a $p$-regular $\lambda$, $s$ takes the simple functor $F_{\lambda}$ to  the simple ${\bf k}[\Sigma_d]$-module
associated to $\lambda$, which we shall denote by $G_{\lambda}$ (in particular, we denote by $G_i$ the simple ${\bf k}[\Sigma_p]$-module associated to the corresponding hook diagram). Also, $s(S_{\lambda})$ and 
$s(W_{\lambda})$ can be explicitly described. They are called, respectively, the Specht and dual Specht modules and denoted by $Sp_{\lambda}$ and 
 $Sp'_{\lambda}$ (again, we will also use the notations: 
 $Sp_i$ and $Sp'_i$).
 
 Now, since in our situation $I^{\otimes p}$ is not projective, it is natural to consider its derived functor operating on the bounded derived categories:
 \[
 Rs: {\cal DP}_{p,n}\longrightarrow {\cal D}{\bf k}[\Sigma_p]{\rm -mod},
 \]
 given by the formula:
 \[
 Rs(F):={\rm RHom}_{{\cal P}_{p,n}}(I^{\otimes d},F),
 \]
 which translates to Ext-computations via the formula:
 \[
 H^t(Rs(F))\simeq {\rm Ext}_{{\cal P}_{p,n}}^t(I^{\ot p}, F).
 \]
 Thus, within the formalism of total derived functors,
we can formulate our Ext-computations as follows (for completeness, we also add the obvious computation of $Rs(S_{\lambda}$):
\begin{cor}
There are the following isomorphisms in 
${\cal D}{\bf k}[\Sigma]_p{\rm -mod}$:
\begin{enumerate}
\item For any $0\leq i\leq n-1$, $Rs(S_i)=Sp_i$.
\item $Rs(F_0)=G_n[-(n-1)]$, $Rs(F_{n-1})=Sp_{n-1}$,
and for $0<i<  n-1$:
\[
 {\rm Ext}_{{\cal P}_{p,n}}^q(I^{\ot p}, F_i)\simeq H^q(Rs(F_i))=
\begin{cases}
G_i & {\rm for\ } q=0\\
G_{n} & {\rm for\ } q=n-1-i\\
0 & {\rm otherwise.}
\end{cases}
\]
\item $Rs(W_{n-1})=Sp_{n-1}$,
and for $0\leq i<  n-1$:
\[
 {\rm Ext}_{{\cal P}_{p,n}}^q(I^{\ot p}, W_i)\simeq H^q(Rs(W_i))=
\begin{cases}
Sp'_i & {\rm for\ } q=0\\
G_{n} & {\rm for\ } q=n-i-2,\  q=n-i-1\\
0 & {\rm otherwise.}
\end{cases}
\]
\end{enumerate}
\end{cor}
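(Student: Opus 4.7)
The plan is to deduce the Corollary from Proposition 2.4 combined with Theorem 2.5. First observe that $I^{\otimes p}$ is projective in $\mathcal{P}_p$: it is a direct summand of $\Gamma^{p,m}$ for $m\geq p$ via the equivalence $\mathcal{P}_p\simeq\mathcal{P}_{p,m}$ of Theorem 1.2. Hence Proposition 2.4 applies with $P=I^{\otimes p}$, giving a natural $\Sigma_p$-equivariant isomorphism
\[
H^q(Rs(F))\;=\;\operatorname{Ext}^q_{\mathcal{P}_{p,n}}(I^{\otimes p},F)\;\simeq\;H^q\bigl(\operatorname{Hom}_{\mathcal{P}_p}(I^{\otimes p},Rj_*(F))\bigr).
\]
Since $s=\operatorname{Hom}_{\mathcal{P}_p}(I^{\otimes p},-)$ is exact (its first argument being projective), $s$ commutes with cohomology, so the right-hand side equals $s\bigl(H^q(Rj_*(F))\bigr)$.

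With this, each case becomes bookkeeping. For part (1), Theorem 1.4(3) gives $Rj_*(S_i)\simeq S_i$ concentrated in degree zero, so $Rs(S_i)=s(S_i)=Sp_i$. For parts (2) and (3), I read off the cohomology of $Rj_*(F_i)$ and $Rj_*(W_i)$ from Theorem 2.5 and apply $s$ to each constituent, using the classical identifications $s(S_\lambda)=Sp_\lambda$, $s(W_\lambda)=Sp'_\lambda$, and $s(F_\lambda)=G_\lambda$ when $\lambda$ is $p$-regular, while $s(F_\lambda)=0$ otherwise. The boundary cases $F_{n-1}$ and $W_{n-1}$ reduce immediately to part (1) since Theorem 2.5 gives $Rj_*(F_{n-1})\simeq S_{n-1}\simeq Rj_*(W_{n-1})$.

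The only point that needs a moment's thought is the special status of $F_0$. The hook $(1^p)$ is $p$-singular (the part $1$ appears $p$ times), so $s(F_0)=0$. This is why, although $H^0(Rj_*(F_0))\simeq F_0$ is non-zero, it vanishes after applying $s$, and the only surviving cohomology of $Rs(F_0)$ is $s(F_n)=G_n$ in degree $n-1$, yielding $Rs(F_0)=G_n[-(n-1)]$. All other hooks $(i+1,1^{p-i-1})$ with $i\geq 1$ are $p$-regular, so the Schur-functor translation is uniform from then on. The real work has thus already been done in Theorem 2.5; the Corollary is essentially its $\Sigma_p$-equivariant reformulation through the derived Schur functor $s$.
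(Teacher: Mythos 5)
Your overall strategy is exactly the intended one: the paper's own proof is literally ``straightforward'', meaning precisely the combination you describe of Proposition 2.4 applied to the projective $P=I^{\otimes p}$, the exactness of $s=\operatorname{Hom}_{\mathcal{P}_p}(I^{\otimes p},-)$ (so that it commutes with cohomology of $Rj_*(-)$), the cohomology computations of Theorem 2.5, and the classical values $s(S_\lambda)=Sp_\lambda$, $s(W_\lambda)=Sp'_\lambda$, $s(F_\lambda)=G_\lambda$ for $p$-regular $\lambda$ and $s(F_{(1^p)})=0$. Your handling of parts (1), (2), of the boundary cases $F_{n-1}\simeq W_{n-1}\simeq S_{n-1}$, and of the $p$-singularity of $(1^p)$ giving $Rs(F_0)=G_n[-(n-1)]$ is correct.

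There is, however, one place where ``apply $s$ to each constituent'' does not close the argument as you state it. Theorem 2.5(2) as printed asserts that $H^q(Rj_*(W_i))=F_{n-1}$ in degrees $q=n-i-2,\,n-i-1$, and $s(F_{n-1})=G_{n-1}$, which contradicts the $G_n$ claimed in part (3) of the Corollary you are proving; your bookkeeping, taken literally, would therefore yield a different statement. The resolution is that the $F_{n-1}$ in Theorem 2.5(2) is a misprint for $F_n$: applying $Rj_*$ to the short exact sequence $0\to F_{i+1}\to W_i\to F_i\to 0$ and using Theorem 2.5(1) (whose positive-degree cohomology is $F_n$, being the cokernel of $S_{n-2}\to S_{n-1}$ in $\mathcal{P}_p$), the long exact sequence forces $H^q(Rj_*(W_i))\simeq F_n$ in those two degrees, since the only possible connecting maps are between non-isomorphic simples and hence vanish. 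With that correction $s(F_n)=G_n$ and part (3) follows as you intend (the same triangle also reconfirms $H^0\simeq W_i$, hence $Sp'_i$, away from the overlapping case $i=n-2$, where the two cases of the formula merge in degree $0$ exactly as in the paper's own statement). So your proof is essentially right and is the paper's proof, but you should make this verification explicit rather than asserting that the constituents of Theorem 2.5(2) match the Corollary as printed.
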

\begin{proof} Straightforward. \end{proof}
{\bf Remark:} 
Let us point out for some interesting phenomena here. Firstly, our formulas are overlapping at the end of the poset, since in ${\cal P}_{p,n}$ we
have $S_{n-1}\simeq F_{n-1}\simeq W_{n-1}$ (because $(n,1^{p-n})$ is minimal in its block). This, for example, shows that $j^*$ is not full, since ${\rm Hom}_{{\cal P}_{p}}(I^{\otimes p}, F_{n-1})=G_{n-1}$ while ${\rm Hom}_{{\cal P}_{p,n}}(I^{\otimes p}, F_{n-1})=Sp_{n-1}$. At the other end of the scale something interesting also happens. Namely,
since $(1^p)$ is $p$-singular we have $s(F_0)=0$, but, as we see,
$Rs(F_0)$ is not only nonzero but it is the shifted $G_{n}$, which could not be hit 
classically, since we do not have $F_n$ in ${\cal P}_{p,n}$.  

These phenomena have  quite a surprising consequence for $n=p-1$.
We recall that for an abelian category ${\cal A}$, $K_0({\cal D}{\cal A})$ denotes
the abelian group generated by objects  of the bounded derived category of ${\cal A}$ with the relations coming from distinguished triangles (see e.g. \cite[Section 2]{cps_kl}). Since we have the
relation $X[1]=-X$, $K_0({\cal D}{\cal A})$ is isomorphic to the usual $K_0({\cal A})$, hence, for an
artinian abelian category ${\cal A}$, it
is a free abelian group of rank equal to the number of non-isomorphic simples in ${\cal A}$. Now we have:
\begin{cor}
$Rs$ induces an isomorphism: 
\[
K_0(Rs): K_0({\cal DP}_{p,p-1})\simeq 
 K_0({\cal D}{\bf k}[\Sigma_p]{\rm -mod}).
 \]
 \end{cor}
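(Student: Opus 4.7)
The plan is to pick bases of $K_0(\mathcal{DP}_{p,p-1})$ and $K_0(\mathcal{D}\mathbf{k}[\Sigma_p]\text{-mod})$ on which $K_0(Rs)$ is represented by an integer matrix, and then reorder so that the matrix becomes upper triangular with $\pm 1$ on the diagonal.

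First I would match ranks. Since $\Lambda(p,p-1)$ consists of all partitions of $p$ except $(p)$, the source is free of rank $P(p)-1$, where $P$ is the partition function; on the target side the simples $[G_\mu]$ of $\mathbf{k}[\Sigma_p]$ are indexed by $p$-regular partitions of $p$, and the only $p$-singular partition of $p$ is $(1^p)$, so the target also has rank $P(p)-1$. Next I would compute $K_0(Rs)[F_\lambda]$ for each $\lambda\in\Lambda(p,p-1)$. For non-hook $\lambda$, the block of $\lambda$ in $\mathcal{P}_{p,p-1}$ is a semisimple singleton, so $F_\lambda$ is injective and hence $Rs(F_\lambda)=s(F_\lambda)=G_\lambda$ (such $\lambda$ is automatically $p$-regular, being $\neq(1^p)$). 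For a hook $\lambda$ corresponding to $0\le i\le p-2$, Corollary 2.6(2) specialised to $n=p-1$ gives
\begin{align*}
K_0(Rs)[F_0] &= (-1)^{p-2}[G_{p-1}],\\
K_0(Rs)[F_i] &= [G_i]+(-1)^{p-2-i}[G_{p-1}] \qquad (1\le i\le p-3),\\
K_0(Rs)[F_{p-2}] &= [Sp_{p-2}].
\end{align*}

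Then I would order the source basis as $F_0,F_1,\ldots,F_{p-2}$ followed by the non-hook $F_\mu$'s, and the target basis as $G_{p-1},G_1,G_2,\ldots,G_{p-2}$ followed by the non-hook $G_\mu$'s paired with the corresponding $F_\mu$. The resulting matrix is block-diagonal: an identity block on the non-hook part, and a $(p-1)\times(p-1)$ hook block. Within the hook block every column other than the last is visibly upper triangular: the $F_0$ column has $(-1)^{p-2}$ in the $G_{p-1}$ entry and zero elsewhere, and each $F_i$ column with $1\le i\le p-3$ has $\pm 1$ in the top row $G_{p-1}$ and $1$ on the diagonal entry $G_i$. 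For the last column, $[Sp_{p-2}]$ lies entirely in the principal $p$-block of $\Sigma_p$, whose simples are exactly the $p$-regular hooks $G_1,\ldots,G_{p-1}$, so all non-hook entries vanish; and the $G_{p-2}$ entry is the decomposition number $[Sp_{p-2}:G_{p-2}]$, which equals $1$ by the classical fact $d_{\lambda\lambda}=1$ for every $p$-regular $\lambda$. The diagonal of the hook block is therefore $(-1)^{p-2},1,\ldots,1$, the total determinant is $\pm 1$, and $K_0(Rs)$ is an isomorphism of free abelian groups of the same rank.

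The only point that is not purely formal is the handling of the $F_{p-2}$ column. It reduces to two classical ingredients: the block-theoretic observation that $Sp_{(p-1,1)}$ lives in the principal $p$-block of $\Sigma_p$, and the identity $[Sp_\lambda:G_\lambda]=1$ for $p$-regular $\lambda$. Conceptually, the result reflects the fact that the ``unstable'' simple $G_{p-1}$, which the classical Schur functor $s$ cannot reach from $\mathcal{P}_{p,p-1}$ because $s(F_0)=0$, is now produced by $Rs(F_0)$ in cohomological degree $p-2$; this precisely compensates for the rank discrepancy between the image of the naive $s$ and $K_0(\mathbf{k}[\Sigma_p]\text{-mod})$, and is the reason the derived Schur functor gives an isomorphism on $K_0$.
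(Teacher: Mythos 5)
Your proof is correct, but it is organized differently from the paper's. The paper does not compute the full matrix of $K_0(Rs)$: it simply notes that both Grothendieck groups are free of the same finite rank (number of simples), so it suffices to prove that $K_0(Rs)$ is \emph{onto}, and surjectivity is read off from Corollary 2.6 --- $Rs(F_0)=G_{p-1}[-(p-2)]$ puts $\pm[G_{p-1}]$ in the image, and the remaining computations then yield the other simple classes. You instead exhibit the matrix of $K_0(Rs)$ on the bases of simples and show it is block-diagonal and, on the hook block, upper triangular with diagonal $(-1)^{p-2},1,\dots,1$, hence unimodular. The inputs are the same (the formulas of Corollary 2.6), but your route makes explicit several things the paper leaves implicit: the rank count $P(p)-1$ on both sides, the treatment of the non-hook (defect-zero) blocks, and the handling of the column of $F_{p-2}$, for which you invoke the two classical facts that $Sp_{(p-1,1)}$ lies in the principal block (whose simples are exactly the $p$-regular hooks) and that $[Sp_\lambda:G_\lambda]=1$; the paper's surjectivity argument needs essentially the same information about $Sp_{p-2}$ but does not spell it out. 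What the paper's argument buys is brevity (no determinant, no triangular ordering); what yours buys is an explicit unimodularity statement and a complete bookkeeping of all blocks. One small caution: when you write $Rs(F_\lambda)=s(F_\lambda)=G_\lambda$ for non-hook $\lambda$, the Hom must be taken in $\mathcal{P}_{p,p-1}$, and one cannot just quote the classical computation of $s$ on $\mathcal{P}_p$ since $j^*$ is not full (the paper's own remark after Corollary 2.6); the clean justification is $F_\lambda=S_\lambda$ in its semisimple block, $j_*S_\lambda\simeq S_\lambda$, so $Rs(F_\lambda)\simeq Sp_\lambda$, which equals $G_\lambda$ because $\lambda$ is a $p$-core --- in $K_0$ this gives the same class you use, so the argument stands.
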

\begin{proof}  It suffices to show that
$K_0(Rs)$ is onto. To this end, we first observe that $\pm[G_{p-1}]$ lies in the image, since $Rs(F_0)=G_{p-1}[-(p-2)]$.
Then, by the rest 
of computations in Corollary 2.6, we see that all $[G_{i}]$
for $0\leq i<p-1$
 lie in the image of $Rs$ too. \end{proof} 
This  can be contrasted with the fact that $Rs$ is extremely far from being an isomorphism. It is not only because ${\bf k}[\Sigma_p]{\rm -mod}$ has infinite homological dimension, but often $s$ acts trivially on Ext-groups
just for a dimension reason.
For example, we have 
${\rm Ext}^*_{{\cal P}_{p,p-1}}(F_0,F_0)={\bf k}[x]/x^{p-1}$ with $|x|=2$ while 
${\rm Ext}^*_{{\bf k}[\Sigma_p]{\rm -mod}}(G_{p-1},G_{p-1})\simeq
H^*(\Sigma_p,{\bf k})$, which is a subring of 
${\bf k}[x]\otimes\Lambda^*(y)$ with $|x|=2, |y|=1$ consisting of elements
of degree congruent to $0$ or $-1$ modulo $2p$.

\section{Spanier-Whitehead duality}
In this section we introduce a duality relating certain subcategories of 
${\cal P}_{d,n}$'s.
Let $\Lambda(d,n,k)$ be the set of Young diagrams of weight $d$
consisting of at most $n$ columns and at most $k$ rows.
We consider the category  ${\cal P}_{d,n,k}$, which is the full subcategory of
${\cal P}_{d,n}$ generated by the simples $F_{\la}$ for $\la$ in $\Lambda(d,n,k)$.  Since $\Lambda(d,n,k)$  is an order ideal in $\Lambda(d,n)$, we are again in the framework of \cite{cps_hwc}. In particular, ${\cal P}_{d,n,k}$ inherits from 
${\cal P}_{d,n}$ the structure of a highest weight category and the embedding
$i_*': {\cal P}_{d,n,k}\ra {\cal P}_{d,n}$ extends to a recollement of abelian categories. Let $ev':=ev\circ i_*'$.
Then the essential image of  $ev'$ can be characterized as consisting of representations with all the weights having all entries non-negative but not exceeding $k$.

Let us consider the bijection between $\Lambda(d,n,k)$ and 
$\Lambda(nk-d,n,k)$, which
sends a Young diagram $\la$ to its complement in the rectangle $n\times k$
turned upside down, i.e. $\la=(\la_1,\ldots,\la_k)\mapsto\widehat{\la}:=
(n-\la_k,\ldots,n-\la_1)$. Our goal is to extend this bijection to an equivalence of highest weight categories, which we shall call the Spanier-Whitehead duality, $\Phi_{d,n,k}: {\cal P}_{d,n,k}\simeq {\cal P}_{nk-d,n,k}$.
\begin{theorem}
There exists a functor $\Phi_{d,n,k}: {\cal P}_{d,n,k}\ra {\cal P}_{nk-d,n,k}$ satisfying the following properties:
\begin{enumerate}
\item 
Let $t_{n,k}: \operatorname{GL}_n^{rat} {\rm -mod}\ra \operatorname{GL}_n^{rat} {\rm -mod}$ be the functor given by $M\mapsto M\ot \det^k$. Then there is an isomorphism of contravariant functors: $ev'\circ \Phi_{d,n,k}\circ (-)^{\#}\simeq
t_{n,k}\circ (-)^{*}\circ ev'$.
\item $\Phi_{d,n,k}$ is an equivalence of $\ka$-linear abelian categories.
\item We have: 
$\Phi_{d,n,k}(S_{\la})\simeq 
S_{\hat{\la}}$, 
$\Phi_{d,n,k}(W_{\la})\simeq W_{\hat{\la}}$, 
$\Phi_{d,n,k}(F_{\la})\simeq 
F_{\hat{\la}}$, i.e. $\Phi_{d,n,k}$ is an equivalence of highest weight categories.
\end{enumerate}
\end{theorem}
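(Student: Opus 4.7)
The plan is to define $\Phi_{d,n,k}$ by transporting the contravariant operation $T:=(-)^{\ast}\otimes\det^{k}$ on $\operatorname{GL}_n^{\text{rat}}$-mod across the full embedding $ev'$ and precomposing with Kuhn duality. First I take for granted the characterisation noted just before the theorem: the essential image of $ev'_{d,n,k}$ consists of polynomial $\operatorname{GL}_n$-modules of degree $d$ all of whose weights have entries in $[0,k]$.

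Next I would check that $T$ restricts to a contravariant equivalence between the essential images of $ev'_{d,n,k}$ and $ev'_{nk-d,n,k}$: dualising sends the weight range $[0,k]$ to $[-k,0]$, after which $\otimes\det^{k}$ shifts each weight by $(k,\ldots,k)$ back into $[0,k]$, while the total degree moves from $d$ to $nk-d$; the identity $T^{2}\simeq\operatorname{id}$ on this subcategory makes $T$ a self-equivalence. Since $ev'_{nk-d,n,k}$ is a full embedding onto its essential image (by Theorem 1.4 and Corollary 1.3), the composite $T\circ ev'_{d,n,k}\circ (-)^{\#}$ factors uniquely up to isomorphism through $ev'_{nk-d,n,k}$, and this factorisation is $\Phi_{d,n,k}$. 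Property (1) holds by construction, and property (2) is immediate because $\Phi_{d,n,k}$ is built entirely out of equivalences.

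For property (3) I would identify $\Phi_{d,n,k}$ on standard, costandard, and simple objects by computing their images under $ev'$. Using $W_\lambda^{\#}\simeq S_\lambda$, $S_\lambda^{\#}\simeq W_\lambda$, and $F_\lambda^{\#}\simeq F_\lambda$, together with the classical fact that each of $(-)^{\ast}$ and $\otimes\det^{k}$ individually preserves the classes of standard, costandard, and simple rational $\operatorname{GL}_n$-modules, everything reduces to a check on highest weights: if $F_\lambda(\mathbf{k}^n)$ has highest weight $\mu$, one must verify that $(k,\ldots,k)-w_0\mu$ is the highest weight of $F_{\hat\lambda}(\mathbf{k}^n)$, and analogously for standards and costandards.

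The main obstacle is this final combinatorial compatibility. It requires a careful reconciliation of the Schur-functor labelling used here (where a Young diagram $\lambda$ with at most $n$ columns and at most $k$ rows is assigned to a $\operatorname{GL}_n$-module of highest weight $\lambda^{t}$) with the weight operation $\mu\mapsto(k,\ldots,k)-w_0\mu$; padding $\lambda$ to $k$ entries and unpacking the definition of $\hat\lambda$ then yields the identity $\hat\lambda{}^{t}=(k-\lambda^{t}_n,\ldots,k-\lambda^{t}_1)$ on padded $n$-tuples, after which the theorem follows formally.
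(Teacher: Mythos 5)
Your proposal is correct in substance but follows a genuinely different route from the paper. You define $\Phi_{d,n,k}$ by transporting $T=(-)^{*}\ot\det^{k}$ across the full embedding $ev'$, so that part (1) holds by construction and part (2) is formal, and you get part (3) from the classical description of duals of simple, standard and costandard $\operatorname{GL}_n$-modules together with the padded-transpose identity $\widetilde{\widehat{\la}}=(k-\widetilde{\la}_n,\ldots,k-\widetilde{\la}_1)$, which is indeed the correct combinatorial crux. The paper instead constructs $\Phi_{d,n,k}$ intrinsically, by the formula $\Phi_{d,n,k}(F)(V)=\operatorname{Hom}_{\mathcal{P}_{nk,n}}(W_P,i'_*(F)\ot S^{nk-d}_V)$ with $P=(n^k)$; part (1) is then a real computation (via the right adjoint $ev^{!}$ given by the Yoneda lemma and the tensor--hom adjunction), part (2) follows by showing $\Phi_{d,n,k}\circ(-)^{\#}$ is an involution, and part (3) is proved inside the functor category: a Littlewood--Richardson/Yamanouchi lemma giving $S_{\widehat{\la}}\simeq S_{P/\la}$, the sum--diagonal adjunction and the Decomposition Formula for the Schur case, then highest-weight-category arguments (co-goodness via Ext-vanishing, Hom with costandards, socles) for the Weyl and simple cases. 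Both arguments prove the stated theorem, and since property (1) determines $\Phi_{d,n,k}$ up to isomorphism (full faithfulness of $ev'$) they produce the same functor; the trade-off is that your route is shorter and uses only standard $\operatorname{GL}_n$ theory as input, while the paper's route produces an explicit formula internal to the functor categories and obtains the classical isomorphism $G_{\la}(\ka^n)^{*}\ot\det^{k}\simeq G_{\widehat{\la}}(\ka^n)$ of Corollary 3.3(1) as output rather than consuming it as input --- which is precisely the ``categorical phenomenon'' the authors stress when they warn against viewing $\Phi_{d,n,k}$ as just the twisted linear dual. One wording slip you should fix: $(-)^{*}$ does not preserve the classes of standard and costandard modules separately, it swaps them, $\Delta(\mu)^{*}\simeq\nabla(-w_0\mu)$ and $\nabla(\mu)^{*}\simeq\Delta(-w_0\mu)$; this swap cancels against the Kuhn-duality swap $S_\la^{\#}\simeq W_\la$, and only with both swaps accounted for does your highest-weight reduction with $(k,\ldots,k)-w_0\mu$ yield $\Phi_{d,n,k}(S_\la)\simeq S_{\widehat{\la}}$ rather than a Weyl functor.
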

\begin{proof} We define $\Phi_{d,n,k}$ by the formula:
\[ \Phi_{d,n,k}(F)(V):={\rm Hom}_{{\cal P}_{nk,n}}(W_P,i_*'(F)\ot S^{nk-d}_V)\]
for the Young diagram $P=(n^k)$. It is clear from this definition that  $\Phi_{d,n,k}$ targets ${\cal P}_{nk-d,n,k}$, however, it is a priori not obvious that its image is in fact contained in ${\cal P}_{nk-d,n,k}$. This will be showed in the course of the proof. We start  with establishing the first assertion of Theorem. Since the weights of $M^*$ are the negatives of those of $M$, we see that  for $F\in{\cal P}_{d,n,k}$ the weights of $F(\ka^n)^*\ot \det^k$ have all entries non-negative, hence it belongs to the image of $ev$. Then we consider the functor $ev^!$ right adjoint to
$ev$. Its existence follows from the Special Adjoint Functor Theorem, but,
by the Yoneda lemma, it can also be explicitly defined as:
\[
ev^!(M)(V):={\rm Hom}_{\operatorname{GL}_n^{rat}{\rm-mod}}(ev(\Gamma^d_{V^*}),M).
\]
Since $ev$ is a full embedding, we
have: $ev^!\circ ev\simeq id$, hence 
\[F(\ka^n)^*\ot{\rm det}^k\simeq
ev\circ ev^{!}(F(\ka^n)^*\ot {\rm det}^k).\]
This shows that in order to obtain the first part of the Theorem it suffices to show that  
\[
ev^{!}(F(\ka^n)^*\ot {\rm det}^k)\simeq \Phi_{d,n,k}
(F^{\#}).\]
 Applying the formula for $ev^!$ and using the tensor-hom adjunction in $\operatorname{GL}_n^{rat}{\rm -mod}$ we obtain:
\[
ev^{!}(F(\ka^n)^*\ot {\rm det}^k)(V)=
{\rm Hom}_{\operatorname{GL}_n^{rat}{\rm -mod}}(\Gamma^{nk-d}_{V^*}(\ka^n),F(\ka^n)^*\ot
{\rm det}^k)\simeq
\]
\[{\rm Hom}_{\operatorname{GL}_n^{rat}{\rm -mod}}(\Gamma^{nk-d}_{V^*}(\ka^n)\ot F(\ka^n),
{\rm det}^k).
\]
Now, since $\det^k=ev(S_P)$ and $ev$ is faithfully full, we get:
\[
ev^!(F(\ka^n)^*\ot {\rm det}^k)=
{\rm Hom}_{{\cal P}_{nk,n}}(\Gamma^{nk-d}_{V^*}\ot i'_*(F),
S_P)\simeq {\rm Hom}_{{\cal P}_{nk,n}}(W_P,S^{nk-d}_{V}\ot i'_*(F^{\#})),
\]
which gives the first part of Theorem. 

 At this point we can at last see that the image of 
$\Phi_{d,n,k}$ lies in ${\cal P}_{nk-d,n,k}$, since by the first part of 
Theorem 3.1 it consists of 
functors having (after taking $ev$) all the weights with the entries not exceeding $k$.

 We turn to the second part. 
Obviously, it suffices to show that $\Phi_{d,n,k}\circ (-)^{\#}$ is an equivalence.
By using the first part again and the fact that $\det^k\ot(\det^k)^*\simeq \ka$, we obtain
\[
ev'\circ \Phi_{d,n,k}\circ (-)^{\#} \circ \Phi_{d,n,k}\circ (-)^{\#}\simeq
t_{n,k}\circ (-)^{*}\circ t_{n,k}\circ (-)^{*}\circ ev'\simeq ev',
\]
which shows that $\Phi_{-,n,k}\circ (-)^{\#}$ is an involution. \newline
Now we turn to the third part. The crucial is the following combinatorial lemma.
\begin{lem}
For any $\la\in \Lambda(d,n,k)$ there is an isomorphism in ${\cal P}_{nk-d,n}$:
$S_{\widehat{\la}}\simeq S_{P/\la}$.
\end{lem}
\begin{proof} By the Littlewood-Richardson Formula, there is a filtration of $S_{P/\la}$ with the graded object 
consisting of Schur functors $S_{\mu}$. 
Our task is to show that among  $S_{\mu}$'s
the only one with $\mu_1\leq n$ is $\widehat{\la}$ and it has  multiplicity $1$. Combinatorially, it means that there is the exactly one Yamanouchi word of shape $P/\la$ and content $\mu$ with $\mu_1\leq n$, and its content is $\hat{\la}$. This is an elementary exercise: we see that such a word must have $1$'s in the places $(i,j)$ for $\la_i< j \leq \la_{i-1}$ by the Yamanouchi condition. Thus, we see that the number of $1$'s is equal to $n-\la_k$.
Then by the same argument the positions of $2$'s are uniquely determined. By continuing in this manner we construct the single word, which is of content $\widehat{\la}$. \end{proof}

Then by using the relationship between ${\cal P}_{nk,n}$ and ${\cal P}_{nk}$, the sum-diagonal adjunction and the Decomposition Formula, we obtain:
\[\Phi_{d,n,k}(S_{\la})(V)={\rm Hom}_{{\cal P}_{nk,n}}\left(W_P,S_{\la}\ot S^{nk-d}_V\right)\simeq
{\rm Hom}_{{\cal P}_{nk}}\left(W_P,S_{\la}\ot S^{nk-d}_V\right)\simeq
\]
\[{\rm Hom}_{{\cal P}_{d}\times{\cal P}_{nk-d}}
\left(W_P(-\oplus -),S_{\la}\hat{\ot} S^{nk-d}_V\right)\simeq 
{\rm Hom}_{{\cal P}_{d}\times{\cal P}_{nk-d}}
\left(\sum_{\mu\in \Lambda(d,n,k)}
W_{\mu}\hat{\ot} W_{P/\mu},S_{\la}\hat{\ot} S^{nk-d}_V\right)
\]
\[
\simeq 
{\rm Hom}_{{\cal P}_{nk-d}}
\left(W_{P/\la},S^{nk-d}_V\right)\simeq S_{P/\la}(V)\simeq S_{\widehat{\la}}(V),\]
 which gives the first isomorphism of the third part of the theorem.
 In order to obtain the second isomorphism 
 we recall that co-good objects in a highest weight category are characterized by the condition that they have trivial higher Ext's with costandard objects. This shows that $\Phi_{d,n,k}(W_{\la})$ is co-good, because 
 $\Phi_{d,n,k}$ induces 
isomorphisms on Ext's. Now we conclude that  $\Phi_{d,n,k}(W_{\la})=W_{\widehat{\la}}$, 
because ${\rm Hom}_{{\cal P}_{nk-d,n,k}}(\Phi_{d,n,k}
 (W_{\la}),S_{\mu})=\delta_{\la\widehat{\mu}}$.
The third isomorphism follows from the fact that a simple object in a highest weight category is the socle of the respective costandard object. \end{proof}

Let us discuss some corollaries to Theorem 3.1 and its relation to the duality in $\operatorname{GL}_n^{rat}$-mod. 
\begin{cor}
Let $\la, \mu\in \Lambda(d,n,k)$, $s\geq 0$ and $G_{\la}, G_{\mu}$ be the Schur, Weyl or simple functors associated to $\la,\mu$ (the choice is the same in a given formula). Then:
\begin{enumerate}
\item There is an  isomorphism in $\operatorname{GL}_n^{rat}$-mod:
\[G_{\la}(\ka^n)^*\ot det^k\simeq G_{\hat{\la}}(\ka^n).
\]
\item There is an isomorphism of graded vector spaces:
\[
{\rm Ext}^*_{{\cal P}_{d,n,k}}(G_{\la}, G_{\mu})\simeq
{\rm Ext}^*_{{\cal P}_{nk-d,n,k}}(G_{\widehat{\la}}, G_{\widehat{\mu}}).
\]
Analogous isomorphisms of Ext-groups hold in the categories:
${\cal P}_{d,n}$ vs. ${\cal P}_{nk-d,n}$, $\operatorname{GL}_n^{rat}$-mod and also, with the exception of the case of simple functors, in  ${\cal P}_d$ vs. 
${\cal P}_{nk-d}$.
\end{enumerate}
\end{cor}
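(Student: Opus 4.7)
The plan is to extract Part (1) directly from Theorem 3.1(1) combined with Theorem 3.1(3), and to derive Part (2) from the fact that $\Phi_{d,n,k}$ is a $\ka$-linear equivalence of abelian categories.

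For Part (1), I specialize the isomorphism of contravariant functors in Theorem 3.1(1) to the object $F = G_{\la}$, obtaining
\[
\Phi_{d,n,k}(G_{\la}^{\#})(\ka^{n}) \simeq G_{\la}(\ka^{n})^{*} \ot \det\nolimits^{k}.
\]
In the simple case $G = F$, Kuhn duality fixes $F_{\la}^{\#} = F_{\la}$, so Theorem 3.1(3) yields $\Phi_{d,n,k}(F_{\la}) = F_{\widehat{\la}}$, and evaluation at $\ka^{n}$ gives the claimed formula at once. In the Schur and Weyl cases, Kuhn duality interchanges $S_{\la}^{\#} = W_{\la}$ and vice versa, so Theorem 3.1(3) identifies $\Phi_{d,n,k}(G_{\la}^{\#})$ with the opposite-type standard/costandard labelled by $\widehat{\la}$; combined with the classical fact that the twisted duality $M \mapsto M^{*} \ot \det^{k}$ in $\operatorname{GL}_{n}^{\mathrm{rat}}$-mod interchanges Schur and Weyl modules, the two swaps cancel and the formula appears in its stated $G$-on-both-sides form.

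For Part (2), the first isomorphism is formal from Theorem 3.1(2) together with Theorem 3.1(3), since a $\ka$-linear equivalence of abelian categories induces isomorphisms on all Ext-groups and $\Phi_{d,n,k}(G_{\la}) \simeq G_{\widehat{\la}}$. The transfer of this isomorphism to $\mathcal{P}_{d,n}$ and to $\operatorname{GL}_{n}^{\mathrm{rat}}$-mod uses that $\Lambda(d,n,k) \subset \Lambda(d,n) \subset X(T_{n})^{+}$ are order ideals, so the corresponding inclusions are derived full embeddings by \cite[Thm. 3.9(a)]{cps_hwc} and hence preserve Ext-groups. The comparison between $\mathcal{P}_{d}$ and $\mathcal{P}_{nk-d}$ combines this with Theorem 1.4(3): the identities $j_{*}(S_{\la}) \simeq S_{\la}$ and $j_{!}(W_{\la}) \simeq W_{\la}$, together with the adjunctions $\{j^{*}, Rj_{*}\}$ and $\{Lj_{!}, j^{*}\}$, transport Ext-groups across $j^{*}$ in the same spirit as in the proof of Theorem 2.1. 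The exclusion of simples here is natural, as $j_{*}$ and $j_{!}$ do not in general preserve simple objects, so Ext-groups between simple functors need not agree between $\mathcal{P}_{d}$ and $\mathcal{P}_{d,n}$.

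The main technical point is tracking the Kuhn-duality twist in Part (1) for the Schur and Weyl types; once this compatibility between the Kuhn swap and the twisted-linear-dual swap at the level of $\operatorname{GL}_{n}$-weights is checked, both parts follow from Theorem 3.1 and the highest weight formalism without further calculation.
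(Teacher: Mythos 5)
Your Part (2) is correct and is essentially the paper's own argument: the first isomorphism comes from Theorem 3.1(2) together with 3.1(3); the transfer to ${\cal P}_{d,n}$, ${\cal P}_{nk-d,n}$ and $\operatorname{GL}_n^{rat}$-mod uses the derived full embeddings provided by the order-ideal/recollement setup; and the comparison ${\cal P}_d$ vs.\ ${\cal P}_{nk-d}$ for Schur and Weyl functors uses $Rj_*(S_\la)\simeq S_\la$, $Lj_!(W_\la)\simeq W_\la$ and the adjunctions $\{j^*,Rj_*\}$, $\{Lj_!,j^*\}$, exactly as in the paper.

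The problem is in Part (1), in the Schur/Weyl cases. What Theorem 3.1(1) combined with 3.1(3) actually yields is the type-swapped isomorphism, which is where the first half of your sentence correctly arrives: taking $F=S_\la$ one gets $\Phi_{d,n,k}(S_\la^{\#})=\Phi_{d,n,k}(W_\la)\simeq W_{\widehat{\la}}$, hence $S_\la(\ka^n)^*\ot\det^k\simeq W_{\widehat{\la}}(\ka^n)$, and symmetrically $W_\la(\ka^n)^*\ot\det^k\simeq S_{\widehat{\la}}(\ka^n)$. The step where you claim that the classical swap effected by $M\mapsto M^*\ot\det^k$ ``cancels'' the Kuhn swap is circular: that classical fact is precisely what the displayed isomorphism expresses, and invoking it again reproduces the same mixed-type statement rather than cancelling anything. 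A genuinely same-type identity $S_\la(\ka^n)^*\ot\det^k\simeq S_{\widehat{\la}}(\ka^n)$ would force the standard and costandard $\operatorname{GL}_n$-modules of the corresponding highest weight to be isomorphic, which fails whenever they are not simple; the paper's own example $\la=(1^p)$, where $S_{\widehat{\la}}=S_{((n-1)^p)}$ has composition length $2$, already rules this out. So the verbatim ``same choice on both sides'' formulation can only be proved in the simple case, where your argument via $F_\la^{\#}\simeq F_\la$ is complete and correct; for Schur and Weyl functors the isomorphism that ``follows immediately from parts 1 and 3 of Theorem 3.1'' (which is all the paper's one-line proof asserts) is the swapped one, and your write-up should state it in that form rather than attempt the cancellation.
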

\begin{proof} The first statement follows immediately from the parts 1 and 3 of Theorem 3.1. The second statement for ${\cal P}_{d,n,k}$  follows from the parts 2 and 3 of Theorem 3.1. The case of ${\cal P}_{d,n}$ and the case of $\operatorname{GL}_n^{rat}$-mod  are  consequences of the full embeddings of respective derived categories, which in turn follow from the 
existence of the recollement setups. The result is that the Ext-groups under inspection are just the same in all these categories.
The case of the category ${\cal P}_d$ is a bit more specific. The Ext-groups 
between Schur functors and between Weyl functors are still  the same 
in ${\cal P}_d$ and ${\cal P}_{d,n}$ because the left adjoint 
to the projection ${\cal P}_d\ra{\cal P}_{d,n}$ preserves the Weyl functors while the right adjoint preserves  the Schur functors. \end{proof}

The first part of Theorem 3.1 indicates to a very close connection between $\Phi_{d,n,k}$ and the monoidal duality in $\operatorname{GL}_n^{rat}$-mod. Also, our proof of the fact that $\Phi_{d,n,k}$ is an equivalence heavily uses the fact that $(-)^*$ is an equivalence. Thus, one should not expect that our Spanier-Whitehead duality sheds 
a new light on representations of $\operatorname{GL}_n$. For example, the parts of Corollary 3.3 concerning $\operatorname{GL}_n$ seem to be well-known to experts (see. e.g.  \cite[Exercise 2.18]{weyman}). That being said, one should not think that $\Phi_{d,n,k}$ is ``just twisted $(-)^*$
restricted to ${\cal P}_{d,n,k}$''. We would rather say that it  points out for a certain categorical phenomenon behind the isomorphisms of Corollary 3.3(1). To explain this point, let us observe that although  both sides of the isomorphism are expressed in terms of natural constructions, a priori there is no evident way of interpreting it functorially. The most obvious reason is that the left-hand side depends on $G$ contravariantly while the right-hand side does covariantly. But even disregarding this observation, we see that just  dimensions
of $G_{\la}(V)^*\ot\Lambda^n(V)^{\ot k}$ and $G_{\widehat{\la}}(V)$ differ drastically (unless $\dim(V)=n$). Thus, we found it quite surprising that the equivalence $(-)^*$, which is ``just taking the linear dual'', leads to the
equivalence $\Phi_{d,n,k}$ on the level of functor categories, which changes dimensions a lot. 
 
Let us illustrate our point by an example, which also shows that the exception in Corollary 3.3(2) is necessary. We consider the diagram $\la=(1^p)\in \Lambda(p,n,p)$ for
$n>1$. Then $\widehat{\la}=((n-1)^p)$ and the functors $S_{\la}=S^p$, $F_{\la}=I^{(1)}$ and
$S_{\widehat{\la}}=S_{((n-1)^p)}$, $F_{\widehat{\la}}=(\Lambda^{n-1})^{(1)}$ apparently have nothing in common.
However, by Theorem 3.1, e.g. $S_{\widehat{\la}}$ has the composition series of length 2 (more precisely, we have: $S_{((n-1)^p)}/(\Lambda^{n-1})^{(1)}=
F_{(n,(n-1)^{p-2},n-2)}$), which is a priori highly non-trivial. At last, when we take Ext-groups, we  have that:
\[
\operatorname{Ext}^{*}_{{\cal P}_{p,n}}(I^{(1)},I^{(1)})\simeq 
\operatorname{Ext}^{*}_{{\cal P}_{pn-p,n}}((\Lambda^{n-1})^{(1)},(\Lambda^{n-1})^{(1)}),
\]
which again is non-obvious, especially when we take into account the fact that the analogous Ext-groups in ${\cal P}_p$ and ${\cal P}_{np-p}$ are very different
by \cite[Thm. 5.6]{ffss}. The last observation also shows that the isomorphism
from Corollary 3.3(2) for the simple functors fails in ${\cal P}_p$ vs. ${\cal P}_{pn-p}$.

Yet another interesting phenomenon is that since ${\cal P}_{d,n,k}$ is semisimple for $d<p$, by Theorem 3.1(2) the same holds for 
${\cal P}_{nk-d,n,k}$. Thus, we obtain a certain semisimple subcategory in the category of functors of large degree.

In a similar spirit, if $n\geq d$ then we have ${\cal P}_{d,n}\simeq {\cal P}_d$,
hence in such a case ${\cal P}_{d,n,k}$ is just a subcategory of ${\cal P}_{d}$.
However, for  the corresponding category ${\cal P}_{nk-d,n,k}$ ``the stability condition'' $nk-d\geq n$ is never satisfied unless $k\leq 2$, where the duality fizzles out to an auto-equivalence isomorphic to the identity.
Therefore we see that our duality is truly an unstable phenomenon.

We finish the article by explaining the name of our duality. We first recall how the classical S-W duality works. We start with, say, the category of CW-complexes
with the monoidal structure given by the smash product. Then we obtain the stable category by formally inverting the operation of smashing with $S^1$.
In this framework,  the Spanier-Whitehead duality says that the monoidal dual in the stable category of a small enough space $X$ (i.e. a good subspace of $S^k$) has the explicit description: it is $S^k\setminus X$ smashed with $S^{-k}$.

Now one can see that our duality $\Phi_{d,n,k}$ formally resembles that topological construction. This time, we start with ${\cal P}_{-,n}:=\oplus_{d\geq 0}{\cal P}_{d,n}$ with the monoidal structure given by tensoring over $\ka$. Then $\operatorname{GL}_n^{rat}$-mod is obtained from ${\cal P}_{-,n}$ by formally inverting the operation of tensoring with $\Lambda^n$. Thus, Theorem 3.1(1) may be thought of as follows: for an object $F$ small enough (i.e. coming from ${\cal P}_{-,n,k}$) we express the monoidal dual of $F$ as applying a certain explicit construction in ${\cal P}_{-,n}$ 
(i.e. $\Phi_{-,n,k}\circ (-)^{\#}$)  to $F$ and then tensoring with 
$(\Lambda^n)^{-k}$. To make this analogy even more striking, let us  recall
that our basic combinatorial construction $\la\mapsto\widehat{\la}$ is just taking the complement of $\la$ in the rectangle $P$ which labels 
$S_P\simeq \det^k$, which is the analogue of $k$-dimensional sphere.

\footnotesize

\textsc{Marcin Chałupnik, Institute of Mathematics, University of Warsaw, Banacha 2, 02-097 Warsaw, Poland}

\textit{E-mail address}: \texttt{mchal@mimuw.edu.pl}\\

\textsc{Patryk Jaśniewski, Institute of Mathematics, University of Warsaw, Banacha 2, 02-097 Warsaw, Poland}

\textit{E-mail address}: \texttt{p.jasniewski@mimuw.edu.pl}
\end{document}